\newcommand{\N}{\mathbb{N}}
\newcommand{\R}{\mathbb{R}}
\newtheorem{Lem}{Lemma}[section]
\newtheorem{Thm}[Lem]{Theorem}
\newtheorem{Pro}[Lem]{Proposition}
\newtheorem{Claim}[Lem]{Claim}
\newtheorem{Def}[Lem]{Definition}
\newtheorem{Cor}[Lem]{Corollary}
\title{Feedback-delay dependence of the stability of cluster periodic orbits in populations of degrade-and-fire oscillators with common activator}
\author[1]{Bastien Fernandez}
\author[1,2]{Matteo Tanzi}
\affil[1]{Laboratoire de Probabilit\'es, Statistique et Mod\'elisation\\
CNRS - Univ. Paris Cit\'e -  Sorbonne Univ.\\
Paris, France}
\affil[2]{Department of Mathematics, King's College London\\
Strand Building, The Strand\\
London, UK}
\affil[ ]{\textit {fernandez@lpsm.paris, matteo.tanzi@kcl.ac.uk}}
\date{}
\begin{document}
\maketitle

\begin{abstract}
Feedback delay has been identified as a key ingredient in the quorum sensing synchronization of synthetic gene oscillators. While this influence has been evidenced at the theoretical level in a simplified system of degrade-and-fire oscillators coupled via a common activator protein, full mathematical certifications remained to be provided. Here, we prove from a rigorous mathematical viewpoint that, for the very same model, the synchronized degrade-and-fire oscillations are 1/ unstable with respect to out-of-sync perturbations in absence of delay, and 2/ are otherwise asymptotically stable in presence of delay, no matter how small is its amplitude. To that goal, we proceed to an extensive study of the population dynamics in this system, which in particular identifies the mechanisms of, and related criteria for, the delay-dependent stability of periodic orbits with respect to out-of-sync perturbations. As an additional outcome, the analysis also reveals that, depending on the parameters, multiple stable partially synchronized periodic orbits can coexist with the fully synchronized one.
\end{abstract}

\leftline{\small\today.}

\section{Introduction} 
Starting with the toggle-switch and repressilator \cite{GCC00,EL00}, elementary regulatory circuits have been proposed, and implemented, as basic building blocks of gene networks in Synthetic Biology. By understanding (and by controlling) the functioning of simple representative examples, this now popular field of research intends to yield advances in bio-engineering and medical applications,  beyond intrinsic interest to fundamental biology \cite{LMH22}. 

A more recent development of Synthetic Biology aims at investigating the collective dimension of the regulatory dynamics in populations composed by many individuals equipped with simple genetic circuits. In particular, in suitably designed populations of quorum sensing oscillators, stunning evidence of fully synchronized oscillations has been obtained, bringing a standard notion in Physics into the realm of micro-biological colonies \cite{DM-PTH10}.  

This experimental phenomenology has called for theoretical conceptualization based on (simplified) mathematical models for the underlying systems dynamics. In particular, the state of each individual in \cite{DM-PTH10} is represented by the concentration of an auto-repressor protein. Oscillatory behaviour then results from a negative feedback loop for the protein concentration level. The system has been designed so that the production occurs via short but large bursts. Fast production is followed by a slow degradation over longer time intervals. Under appropriate considerations, this {\sl degrade-and-fire} (DF) mechanism can be represented by a differential equation whose vector field is negative constant (corresponding to degradation) and where the concentration is instantaneously reset when it reaches 0 (firing) \cite{MBHT09}. 

In simple models of coupled DF oscillators, rigorous proofs of a sharp transition from a virtually uncoupled regime to massive clustering upon increase of the interaction strength, have been established \cite{FT11,FT14} and confirm appropriate modelling of the phenomenology. 

In a more elaborated model \cite{MHT14}, the quorum sensing mechanism that favours synchrony relies on the presence of a common activator protein that increases the amplitude of the firings. The activator concentration is encoded in an additional variable that is coupled to the mean concentration of the individual repressors. Moreover, as a simplification of the detailed mechanistic model in \cite{DM-PTH10}, a delay has been introduced into the dynamics, which affects the activator concentration involved in the firings. Numerics and theoretical investigations have revealed that this delay plays a crucial role in stabilizing the synchronized oscillations. 

Delayed interactions are known to have a significant impact of the functioning of  biological systems, for instance in the regulation of the synchronization of oscillations in gene networks during development \cite{JST03, YMN20, KIS23}. There exists a large literature on the Lyapunov stability of synchronized oscillations in systems of delayed differential equations \cite{HMN09,JFSJ15,SW89}. The literature also includes instances of systems whose characteristics are close (but distinct) to the degrade-and-fire oscillators and for which stability is shown to sharply depend on the parameter(s) \cite{GJR16,LL22}.  The crucial role played by the delay to favor synchronization has been highlighted in different situations. For example in \cite{J23}, it has been shown how a time-delayed coupling between two oscillators can result into synchronization for arbitrarily small size of the coupling strength (while  perturbation arguments show that synchrony cannot be achieved in absence of delay).

Back to \cite{MHT14}, the authors have carried out a numerical investigation of the typical orbits  arising in their  model under various conditions. In particular, different values of the degradation rate of the concentration of the activator protein, different responses of the reset value on the concentration of the activator, different delays, and different amplitude of the noise (added at each firing event) have been considered. The system exhibits a plethora of behaviours, especially synchronization, stable and metastable clustering, and absence of synchronization. One of the main outcomes of these investigations is that the delay in the coupling has a crucial role in favoring the synchronization of the concentrations. 

The goal of the present paper is to mathematically certify the observations in \cite{MHT14}. More precisely, we prove in particular that for the deterministic dynamics of the model therein, the synchronized oscillations are unstable (with respect to out-of-sync perturbations) in absence of delay, and that they are otherwise asymptotically stable for every positive delay. 

The paper is organized as follows. Firstly, we recall the definition of the model and we provide the basic properties that are useful for the analysis of its dynamics.  Then, we proceed in Section \ref{S-SYNCHRO} to the study of the existence and the delay-dependent stability of the fully synchronized periodic orbit (Proposition \ref{MAINRES}). 
This analysis identifies the main mechanisms involved in that phenomenology. These elements are further developed in Section \ref{S-PARTSYNC} which presents a systematic approach to the stability of arbitrary periodic orbits with partially synchronized repressor concentrations. In particular, a criterion for instability in absence of delay and another criterion for stability in presence of delay are established. The theory culminates with a delay-dependent existence and stability statement (Theorem \ref{STABPARTSYNC}) which is based on the corresponding properties inside the partially synchronized subspace, in absence of delay. Theorem \ref{STABPARTSYNC} is the extension of Proposition \ref{MAINRES} to arbitrary partially synchronized orbits. Finally, an example of application to periodic orbits with several clusters of equi-distributed repressor concentrations is given, which shows in particular that several forms of asymptotically stable (partially) synchronized oscillations can coexist in this system. 

\paragraph{Aknowledgments:} This project has received funding from the European Union’s Horizon 2020 research and innovation
programme under the Marie Sklodowska-Curie grant agreement No 843880.

\section{The dynamical system and its basic properties}
\subsection{Definition of the dynamics}\label{Sec:DefSec}
Following \cite{MHT14}, we consider a population of $N$ DF oscillators ($N\in \N$) represented by the variable $(\mathbf x,A)\in (\R^+)^{N+1}$ where $\mathbf x=(x_1,\cdots ,x_N)\in (\R^+)^N$ collects the concentrations $x_i$ of the repressor proteins and $A$ denotes the activator concentration. 

The dynamics can be depicted as follows. Each repressor concentration decays independently at constant speed -1. When it reaches zero, it is instantaneously reset to a value that depends on the activator concentration. We shall refer to reset events as {\bf firings}. In addition, the repressor proteins contribute to the synthesis of activator, which itself also degrades at constant rate. In formal terms, time variations of the variable $(\mathbf x,A)$ are governed by the following coupled equations
\begin{align}
&\left\{
\begin{array}{lcc}
\dot{x}_i(t)=-1&\text{if}&x_i(t)>0\\
x_i(t^+)=R+\nu A(t-\tau)&\text{if}&x_i(t)=0
\end{array}
\right.\quad\quad\forall i \in\{1,\cdots ,N\}\label{ODE1}\\
&\begin{array}{l}
\dot{A}(t)=m(t)-\beta A(t)\quad\text{if}\quad\ m(t^+)=m(t)\quad\text{where}\quad m(t)=\frac1{N}\sum_{i=1}^Nx_i(t)\label{ODE2}
\end{array}
\end{align}
with initial repressor concentration vector $\mathbf x(0)=\mathbf x^0\in \R^+$ and initial activator concentration profile $A|_{[-\tau,0]}\in (\R^+)^{[-\tau,0]}$. 

The dynamics depends on four parameters,\footnote{The definition in \cite{MHT14} has an additional parameter for the repressors decay rate. However, this rate can be set to 1 by an appropriate rescaling of the variables and other parameters.} namely $R,\beta,\nu\in\R^+_\ast$ and $\tau\in\R^+$ for which, for the sake of the analysis, we impose the conditions\footnote{Notice that the inequalities $\nu<\beta$ and $\tau<R$ hold in all numerical results in \cite{MHT14}.}
\[
\nu<\beta \quad \text{and}\quad \tau<R,
\]
and the inequalities \eqref{CONDORDER} and \eqref{Eq:Cond3} below. Some of the formal statements below explicitly express the dependence on certain parameters, especially $\beta$ and $\tau$. In these cases, all other parameters are implictly assumed to be given beforehand. 

As norms are concerned, both in $\R^N$ and for real functions, we shall use the following notations
\[
\|\mathbf x\|_N=\max_{i\in\{1,\cdots, N\}}|x_i|\quad \text{and}\quad \|A|_I\|_0:=\sup_{t\in I}|A(t)|,
\]
where $N\in\N$ and the interval $I$ are arbitrary.  

\subsection{Basic considerations and elementary properties}
\subsubsection{Existence of global trajectories and conditions for well-posedness in $(\R^+)^{N+1}$}
\paragraph{Existence and uniqueness of global solutions.} Given an arbitrary function $A:[-\tau,+\infty)\to \R^+$, an index $i\in\{1,\cdots ,N\}$ and $x_i^0\in\R^+$, equation \eqref{ODE1} for the single real variable $x_i$, with initial condition $x_i(0)=x_i^0$, trivially admits a unique piecewise linear left continuous solution $x_i:[0,+\infty)\to \R^+$ with constant slope $-1$ and positive jump discontinuities at firings. 

Independently, let $m:[0,+\infty)\to \R^+$ be an arbitrary left continuous piecewise affine function with finitely many discontinuities in every bounded interval and let $A^0\in\R^+$ be arbitrary. Equation \eqref{ODE2} with initial condition $A(0)=A^0$ admits a unique continuous solution $A(t)=\phi^t_m(A^0)$ where $\phi^\cdot_m:[0,+\infty)\to \R^+$ is defined by the following variation of constant formula\footnote{Indeed, equation \eqref{ODE2} determines $\phi_m^t(A^0)$ on a dense subset of $\R^+$, which can then be uniquely extended to the entire $\R^+$ by continuity.} 
\begin{equation}
\phi^t_m(A):=\left(A+\int_0^te^{\beta s}m(s)ds\right)e^{-\beta t}.
\label{Eq:EvforA}
\end{equation}
In addition, this expression implies that $A(t)$ is Lipschitz continuous on every bounded interval. Its Lipschitz constant is controlled by the supremum of $m$ on the interval under consideration together with the value of $A$.

Put together, and since the mean value $\frac1{N}\sum_{i=1}^Nx_i(t)$ associated with the solution must be left continuous and has finitely many jumps on every interval $[0,t]$, the arguments above imply that the coupled equations \eqref{ODE1}-\eqref{ODE2} have, given any initial datum $(\mathbf x^0,A|_{[-\tau,0]})$, a {\bf unique global solution} $(\mathbf x(t),A(t))$ such that $\mathbf x(0)=\mathbf x^0$ and $t\mapsto A(t)$ is continuous on $t\in\R^+$.

From equation \eqref{ODE1} and expression \eqref{Eq:EvforA}, one easily deduces that each oscillator must fire infinitely often in every trajectory. Moreover, the time duration between two consecutive firings of any given oscillator is equal to the reset concentration at the previous firing; in particular this time cannot be smaller than $R$. In addition, the first firing time of oscillator $i$ is $x^0_i$.

\paragraph{Condition for well-posedness in $(\R^+)^{N+1}$.} To provide the profile $A|_{[-\tau,0)}$ in the initial datum only serves to specify the reset concentration(s) at any firing that would occur in the time interval $[0,\tau)$. Actually, since $\tau<R$, only the first firing of some/all oscillators - those firings at the times $x^0_i<\tau$ - can  occur in this interval, because any reset value is at least $R$. Therefore, it suffices to provide ($\mathbf x^0,A^0$ and) the values $A(x^0_i-\tau)$ for $x^0_i<\tau$, in order to define the trajectory, {\sl ie.}\ the dynamics is indeed well-posed in finite dimension.

In particular, for $\tau=0$, the knowledge of $(\mathbf x^0,A^0)$ suffices to define the trajectory; hence the dynamics is well-posed in $(\R^+)^{N+1}$, which is particularly convenient for the stability analysis of periodic orbits. 

When $\tau>0$, if
\[
\text{min}_{\mathbf x^0}:=\min_i x_i^0\geq \tau,
\]
then no firing can occur in the time interval $[0,\tau)$; hence the subsequent trajectory is again well-defined given only $(\mathbf x^0,A^0)$.  
This is also the case when $\text{min}_{\mathbf x^0}<\tau$ if it is impossible that a firing occurs in the past time interval $[\text{min}_{\mathbf x^0}-\tau,0)$. Indeed, one can reverse the time direction in equation \eqref{ODE2} in order to compute the values $A(x_i^0-\tau)$ for $x_i^0<\tau$ using $(\mathbf x^0,A^0)$. More precisely, we have $A(x_i^0-\tau)=\phi_m^{x_i^0-\tau}(A^0)$ where for $t\in\R^+$, $\phi^{-t}_m(A)$ is defined by  the backward time variation of constant formula 
\begin{equation}
\phi^{-t}_m(A):=\left(A-\int_0^te^{-\beta s}m(-s)ds\right)e^{\beta t},
\label{BCKVC}
\end{equation}
with $m(-s)=m^0+s$ for $s\in [0,\tau -\text{min}_{\mathbf x^0}]$ and $m^0:=\frac1{N}\sum_{i=1}^Nx^0_i$. We must also make sure that the values $A(x_i^0-\tau)$ are non-negative, {\sl viz.}\ $A^0\geq A_{\mathbf x^0}(\tau)$, where 
\[
A_{\mathbf x^0}(\tau):=\int_0^\tau e^{-\beta s} (m^0+s)ds,
\]
(which tends to 0 as $\tau\to 0$).

As expression \eqref{Eq:EvforA}, the backward time formula \eqref{BCKVC} is Lipschitz continuous on every bounded interval. Given $\mathbf x, A,\beta$ and $\tau$, let $K:=K_{\mathbf x,A,\beta,\tau}$ be its Lipschitz constant on $[0,\tau]$ when computed with $m(-s)=\frac1{N}\sum_{i=1}^Nx_i+s$. 
\begin{Claim}
Assume that $\text{min}_{\mathbf x^0}\geq \tau$ or $\text{min}_{\mathbf x^0}<\tau$ and
\[
\text{max}_{\mathbf x^0}:=\max_i x^0_i<R+\nu A^0+(\nu K+1)\text{min}_{\mathbf x^0}-(2\nu K+1)\tau\quad\text{and}\quad  A^0\geq A_{\mathbf x}(\tau).
\]
Then the trajectory is well-defined given $(\mathbf x^0,A^0)\in(\R^+)^{N+1}$ and lies in $(\R^+)^{N+1}$. 
\label{CLAIMFINITEDIM}
\end{Claim}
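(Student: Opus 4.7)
The plan is to split into the two cases of the hypothesis and, in each, exhibit a past history of the dynamics that uniquely determines the reset values at the first firings, from which the forward trajectory follows.

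In the first case ($\text{min}_{\bo x^0} \geq \tau$), no firing occurs in $[0,\tau)$ because oscillator $i$ first fires at $t=x_i^0 \geq \tau$. Hence every firing in $[\tau,+\infty)$ involves a reset value $R+\nu A(t-\tau)$ with $t-\tau \geq 0$, and $A$ on $\R^+$ is forward-determined from $A^0$ via \eqref{Eq:EvforA}. Non-negativity of $A$ is immediate from \eqref{Eq:EvforA}, and each $x_i$ stays non-negative by construction (linear decay interrupted by resets to values $\geq R > 0$).

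For the second case ($\text{min}_{\bo x^0} < \tau$), I would adopt the ansatz that no firing has occurred in $[\text{min}_{\bo x^0}-\tau, 0)$, so that $m(-s) = m^0 + s$ and \eqref{BCKVC} supplies $A(-s)$. The hypothesis $A^0 \geq A_{\bo x^0}(\tau)$, together with monotonicity in $s$ of the integrand, yields $A(-s) \geq 0$ for every $s \in [0,\tau]$, and in particular at the points $s = \tau - x_i^0$ entering the reset formula. Consistency of the ansatz is then verified by contradiction: a firing of some oscillator $i$ at $t^* \in [\text{min}_{\bo x^0}-\tau, 0)$ would (since $|t^*| < \tau < R$ rules out a second firing of $i$ in $(t^*,0]$) force $x_i^0 = R + \nu A(t^*-\tau) + t^*$. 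The Lipschitz bound of \eqref{BCKVC} with constant $K$ gives $A(t^*-\tau) \geq A^0 - K(2\tau - \text{min}_{\bo x^0})$; combined with $x_i^0 \leq \text{max}_{\bo x^0}$ and $-t^* \leq \tau - \text{min}_{\bo x^0}$, this yields
\[
\text{max}_{\bo x^0} \geq R + \nu A^0 + (\nu K + 1)\text{min}_{\bo x^0} - (2\nu K + 1)\tau,
\]
which contradicts the hypothesis. With the ansatz validated, the reset values at the first firings are uniquely determined from $(\bo x^0, A^0)$, and the forward trajectory is well-defined and stays in $(\R^+)^{N+1}$.

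The main obstacle will be this consistency step, specifically the use of the Lipschitz constant $K$ (computed for the naive past $m(-s) = m^0 + s$) to control $A$ in a hypothetical past that contains a firing at $t^*$. I expect the subtlety to be resolved by a direct comparison of the two $m$-profiles: the identity $x_i^0 = R + \nu A(t^*-\tau)+t^*$ forces $m \equiv m^0 - t$ on $(t^*, 0]$, whereas on $[\text{min}_{\bo x^0}-\tau, t^*]$ oscillator $i$'s contribution is $t^* - t$ instead of $x_i^0 - t$, so $m$ is strictly smaller than the naive value. By monotonicity of \eqref{BCKVC} in $m$, the hypothetical $A(t^*-\tau)$ exceeds its naive counterpart, hence the naive Lipschitz lower bound applies \emph{a fortiori}. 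A minor technical point is that the estimate requires Lipschitz regularity on the slightly wider interval $[0, 2\tau - \text{min}_{\bo x^0}]$; this is obtained from the same bounds on the integrand in \eqref{BCKVC}, at the cost of mildly enlarging $K$ if needed.
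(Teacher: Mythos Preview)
Your approach is essentially the paper's: both use the Lipschitz bound on the backward formula \eqref{BCKVC} to show that the backward-evolved repressor concentrations cannot reach any reset value on $[\text{min}_{\mathbf x^0}-\tau,0)$, thereby ruling out past firings. The paper's proof is a one-liner stating only the resulting inequality (with what appears to be a sign slip); your monotonicity comparison and the widened-interval remark for $K$ fill in details that the paper leaves implicit.
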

\noindent
{\sl Proof:} The proof is immediate. Together with the Lipschitz continuity mentioned above, the condition in the statement implies 
\[
x_i(t)= \text{max}_{\mathbf x^0}+t<R+\nu A(t-\tau),\ \forall t\in [\text{min}_{\mathbf x^0}-\tau,0],
\]
{\sl ie.}\ no reset value can be attained by the backward flow; hence no firing can occur in the time interval $[\text{min}_{\mathbf x^0}-\tau,0)$. \hfill $\Box$

Notice finally that the last inequality indicates that the following stronger condition
\begin{equation}
\text{max}_{\mathbf x^0}+\text{min}_{\mathbf x^0}<R-\tau\quad\text{and}\quad  A^0\geq A_{\mathbf x}(\tau),
\label{DOMAINFN}
\end{equation}
suffices to obtain the same conclusion. We shall use this sronger condition in the stability analysis of the synchronized periodic orbit in Section \ref{S-PSTABSYNC}. Moreover, notice also that the condition in Claim \ref{CLAIMFINITEDIM} will be necessary for the stability analysis of periodic orbits with equidistributed repressor concentrations (see Section \ref{S-EQUIDIST}).

\subsubsection{Attracting invariant set} 
The dissipative term in equation \eqref{ODE2} suggests that the trajectories should asymptotically approach a bounded forward invariant set.  This property is formally expressed in the next statement. Let  
 \begin{equation}\label{Eq:A0}
 A_\text{max}:= \frac{R}{\beta -\nu}\quad \text{and}\quad Q:=[0,R+\nu  A_\text{max}]^N\times [0, A_\text{max}].
 \end{equation}
 \begin{Lem}
Assume that $\mathbf x^0\in [0,R+\nu  A_\text{max}]^N$ and $A|_{[-\tau,0]}\in [0,A_\text{max}]^{[-\tau,0]}$. Then the solution satisfies 
\[
(\mathbf x(t),A(t))\in Q,\quad \forall t\in\R^+.
\]
In addition, given any initial datum $(\mathbf x^0,A|_{[-\tau,0]})$, the subsequent trajectory satisfies
\[
\limsup_{t\rightarrow +\infty}A(t)\le A_\text{max},\quad\text{and then}\quad\limsup_{t\rightarrow +\infty}x_i(t)\le R+\nu A_\text{max}\quad\forall i\in\{1,\cdots ,N\}.
\] 
\label{Prop:Attractingset}
\end{Lem}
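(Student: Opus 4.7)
My plan hinges on the algebraic identity $R+\nu A_\text{max}=\beta A_\text{max}$, which follows from $A_\text{max}=R/(\beta-\nu)$ and makes $A=A_\text{max}$ a weak nullcline of the activator equation whenever $m\le \beta A_\text{max}$. For the forward invariance statement, I would argue by contradiction, letting
\[
T^\ast:=\sup\{t\ge 0: A(s)\le A_\text{max}\ \forall s\in [-\tau,t]\ \text{and}\ x_i(s)\le R+\nu A_\text{max}\ \forall s\in [0,t],\ i\},
\]
which is $\ge 0$ by hypothesis. If $T^\ast<+\infty$, one of two types of failure must occur at or just after $T^\ast$. A jump violation $x_i(T^{\ast+})>R+\nu A_\text{max}$ would require $A(T^\ast-\tau)>A_\text{max}$, impossible since $T^\ast-\tau\in[-\tau,T^\ast]$. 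A continuous violation of $A\le A_\text{max}$ at $T^\ast$ would require $\dot A(T^{\ast+})>0$, but even accounting for a possible reset at $T^\ast$ one has $x_j(T^{\ast+})\le R+\nu A_\text{max}$ for every $j$, hence $m(T^{\ast+})\le R+\nu A_\text{max}=\beta A_\text{max}$, which forces $\dot A(T^{\ast+})\le 0$. In both cases a contradiction, so $T^\ast=+\infty$ and $Q$ is forward invariant.

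For the attracting property the initial datum is arbitrary, and I would proceed in two steps. \emph{Global boundedness of $A$:} introducing $U(t):=\sup_{s\in[-\tau,t]}A(s)$ and observing that every $x_i(s)$ with $s\le t$ is dominated either by $x_i^0$ (if oscillator $i$ has not yet fired) or by a past reset value $R+\nu A(t'-\tau)\le R+\nu U(t)$, the variation-of-constants formula \eqref{Eq:EvforA} yields a self-consistent inequality of the form $U(t)\le C_0+(\nu/\beta)\,U(t)$ with $C_0$ depending only on the initial datum. The assumption $\nu<\beta$ converts this into the uniform bound $U(t)\le C_0/(1-\nu/\beta)$. \emph{Limsup comparison:} once $\bar A:=\limsup_{t\to\infty}A(t)$ is finite, the fact that $x_i$ is non-increasing between firings and that every reset after a sufficiently large time is bounded by $R+\nu(\bar A+\varepsilon)$ gives $\limsup_{t\to\infty}x_i(t)\le R+\nu \bar A$. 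Injecting this bound into $\dot A=m-\beta A$ and comparing with the corresponding driven linear equation yields $\beta\bar A\le R+\nu\bar A$, i.e.\ $\bar A\le A_\text{max}$, from which the bound on $\limsup x_i$ follows.

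The step I expect to be most delicate is the global boundedness of $A$ under arbitrary initial conditions: the coupling between past values of $A$ (through the delayed reset) and current values of $m$ makes a direct Gronwall argument circular, and the way out is precisely to use $\nu<\beta$ to turn the self-consistent inequality on $U(t)$ into a genuine contraction. Once boundedness is in hand, the remainder of the argument is a routine application of \eqref{Eq:EvforA} and of the key identity $R+\nu A_\text{max}=\beta A_\text{max}$ that already drove the forward-invariance step.
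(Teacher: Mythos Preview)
Your plan is sound and would yield a correct proof, but it differs from the paper's in both halves. For forward invariance, the paper also argues by contradiction but works with the first time $A$ reaches $A_\text{max}+\delta$ for an arbitrary $\delta>0$, and then uses the variation-of-constants formula \eqref{Eq:EvforA} together with $\nu<\beta$ to obtain $A(t_\delta)<A_\text{max}+\delta$ directly; your barrier argument at $A=A_\text{max}$ is valid too, but you should add a word on the borderline case $\dot A(T^{\ast+})=0$ (harmless here since $\ddot A=\dot m-\beta\dot A=-1$ between firings), which the paper's $\delta$-shift sidesteps. For the attracting statement the approaches diverge more substantially. To get global boundedness of $A$ from arbitrary data, the paper shows that at any running-maximum time $t_\ast>\text{max}_{\mathbf x^0}$ one has $\dot A(t_\ast^-)<0$ whenever $A(t_\ast)>A_\text{max}$, contradicting maximality; you instead close a self-referential inequality on $U(t)=\sup_{[-\tau,t]}A$ via the contraction factor $\nu/\beta<1$. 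To pass from boundedness to $\limsup A\le A_\text{max}$, the paper runs an explicit bootstrap (if $A\le(1+\delta)A_\text{max}$ for all large $t$ then $A\le(1+\rho\delta)A_\text{max}$ for all large $t$, for any fixed $\rho\in(\nu/\beta,1)$), whereas your single limsup comparison with the driven linear equation yields $\beta\bar A\le R+\nu\bar A$ in one stroke. Your route is more economical; the paper's bootstrap makes the contraction rate towards $A_\text{max}$ explicit.
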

In the rest of the paper, we always assume that $(\mathbf x(t),A(t))\in Q$ for all $t\in\R^+$, even when this is not explicitly stated.
\begin{proof} 
 Assume that $\mathbf x^0\in [0,R+\nu  A_\text{max}]^N$ and $A|_{[-\tau,0]}\in [0,A_\text{max}]^{[-\tau,0]}$. Then, expression \eqref{Eq:EvforA} implies that $A(t)\geq 0$ for all $t\in\R^+$. We prove that $A(t)\leq A_\text{max}$ for all $t\in\R^+$ (and then $\max_i x_i(t)\leq R+\nu A_\text{max}$  for all $t\in\R^+$) by contradiction. Since $t\mapsto \phi_m^t(A(0))$ is continuous, assume otherwise the existence of $\delta,t_\delta\in\R^+_\ast$ such that 
 \[
 A(t_\delta)=A_\text{max}+\delta\quad\text{and}\quad A(t)<A_\text{max}+\delta\ \text{for}\ t\in [-\tau,t_\delta).
 \]
 Then, we certainly have $m(t)<R+\nu (A_\text{max}+\delta)$ for $t\in [0,t_\delta)$. Using expression \eqref{Eq:EvforA}, the definition of $A_\text{max}$ and the condition $\nu<\beta$ successively imply
 \[
 A(t_\delta)\leq \left(A_\text{max}+(R+\nu (A_\text{max}+\delta))\frac{e^{\beta t_\delta}-1}{\beta}\right)e^{-\beta t_\delta}=A_\text{max}+\frac{\nu}{\beta}\delta(1-e^{-\beta t_\delta})<A_\text{max}+\delta
 \]
 which is impossible.
 \medskip
 
 Considering now an arbitrary trajectory, we first prove that for every $t_\ast> \text{max}_{\mathbf x^0}$ such that
 \[
 A(t_\ast)=\max_{t\in [-\tau,t_\ast]}A(t),
 \]
 we must have $A(t_\ast)\leq A_\text{max}$. By contradiction, the fact that all oscillators must have been reset at least once before time $t_\ast$ and the definition of $t_\ast$ imply that we must have $m(t_\ast)\leq R+\nu A(t_\ast)$ and hence
 \[
 \dot A(t_\ast)\leq R+\nu A(t_\ast)-\beta A(t_\ast)<0
 \]
where the second inequality follows from $A(t_\ast)>A_\text{max}$. Moreover, the derivative $\dot A(t)$ is left continuous. Hence $A$ must be decreasing in the left neighbourhood of $t_\ast$, which is impossible from the definition of $t_\ast$. Therefore, we must have $\sup_{t\in\R^+}A(t)<+\infty$ in every trajectory. 

In order to prove that $\limsup_{t\to +\infty}A(t)\le A_\text{max}$, we use a bootstrap argument. Let $\rho\in (0,1)$ be sufficiently large so that $\nu <\rho\beta$. We claim that for every $\delta>0$ such that 
\begin{equation}
A(t)\leq  (1+\delta)A_\text{max},\ \forall t\in [-\tau,+\infty),
\label{BOOTSTRAP}
\end{equation}
there exists $t_\delta\in \R^+$ such that $A(t)\leq (1+\rho\delta)A_\text{max} $ for all $t\in [t_\delta,+\infty)$. Indeed, assume firstly that $A(t)>(1+\rho\delta)A_\text{max}$ for all $t\in\R$. Then we would have 
\[
\dot{A}(t)< -\beta  (1+\rho\delta)A_\text{max} +R+\nu  (1+\delta)A_\text{max}=(\nu -\rho\beta)A_\text{max}\delta<0,\ \forall t\in\R^+,
\]
ie.\ $A(t)$ would have to decrease at least linearly. Given the inequality \eqref{BOOTSTRAP}, it would be impossible that it remains above $(1+\rho\delta)A_\text{max}$ forever. Moreover, a similar reasoning implies that if $A(t_\delta)=(1+\rho\delta)A_\text{max}$, then we must have $A(t)\leq A(t_\delta)$ for all $t>t_\delta$.
\end{proof}

\subsubsection{Preservation of the order in which the oscillators fire} 
Since all degradation rates are equal, in every trajectory, the initial ordering of the repressor concentrations is preserved until the first firing. Under the assumption
\begin{equation}
\nu<\frac{\beta}{1+\beta R},
\label{CONDORDER}
\end{equation}
we are going to show that, when inside $Q$, between any two consecutive firings of a given oscillator, every reset concentration must lie above the current concentration of that oscillator. By induction, this implies that, after the last of the first firing times of each oscillator, the order of the repressor concentrations is cyclically permuted at each firing (and evidently remains constant in time between firings), implying that the order in which the oscillators fire is preserved forever. That property will make the analysis of the dynamics simpler. 

Given $i\in\{1,\cdots ,N\}$ and $k\in\N$, let $t_i^k$ be the instant of the $k^\text{th}$ firing of oscillator $i$. 
\begin{Claim}
Assume that inequality \eqref{CONDORDER} holds and consider a trajectory for which $(\mathbf x(t),A(t))\in Q$ for all $t\in\R^+$. Let $i\in\{1,\cdots ,N\}$ and $k\in\N$ be arbitrary. If an oscillator $j\in\{1,\cdots, N\}$ fires between the $k$th and $(k+1)$th firings of $i$ ({\sl ie.}\ if there exists $\ell\in \N$ such that $t^\ell_j\in (t^k_i,t^{k+1}_i)$), then we have 
\[
x_i\left((t^\ell_j)^+\right)<x_j\left((t^\ell_j)^+\right).
\]
\label{CLAIMORDER}
\end{Claim}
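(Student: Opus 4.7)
The plan is to compute $x_i((t^\ell_j)^+)$ and $x_j((t^\ell_j)^+)$ explicitly, then bound their difference from below using the dissipative bound on $\dot A$ together with the inequality \eqref{CONDORDER}. Since oscillator $i$ does not fire at $t^\ell_j$, the trajectory $t\mapsto x_i(t)$ is continuous there, so $x_i((t^\ell_j)^+)=x_i(t^\ell_j)$. Using that $x_i$ was reset at $t^k_i$ to $R+\nu A(t^k_i-\tau)$ and decays at rate $-1$ until the next firing, I get
\[
x_i((t^\ell_j)^+)=R+\nu A(t^k_i-\tau)-(t^\ell_j-t^k_i),
\]
while by definition of a firing, $x_j((t^\ell_j)^+)=R+\nu A(t^\ell_j-\tau)$. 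Hence
\[
x_j((t^\ell_j)^+)-x_i((t^\ell_j)^+)=\nu\bigl[A(t^\ell_j-\tau)-A(t^k_i-\tau)\bigr]+(t^\ell_j-t^k_i).
\]

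Next, I would control the (possibly negative) variation of $A$ on the interval $[t^k_i-\tau,t^\ell_j-\tau]$. Since $(\mathbf x(t),A(t))\in Q$, we have $m(t)\geq 0$ and $A(t)\leq A_\text{max}$, so from \eqref{ODE2}, $\dot A(t)\geq -\beta A(t)\geq -\beta A_\text{max}$ everywhere. Integrating gives
\[
A(t^\ell_j-\tau)-A(t^k_i-\tau)\geq -\beta A_\text{max}(t^\ell_j-t^k_i),
\]
and plugging this into the previous identity yields
\[
x_j((t^\ell_j)^+)-x_i((t^\ell_j)^+)\geq \bigl(1-\nu\beta A_\text{max}\bigr)(t^\ell_j-t^k_i).
\]
Since $t^\ell_j>t^k_i$, it then suffices to show $\nu\beta A_\text{max}<1$, which, using $A_\text{max}=R/(\beta-\nu)$, is algebraically equivalent to $\nu(1+\beta R)<\beta$, i.e.\ precisely condition \eqref{CONDORDER}.

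I do not foresee any serious obstacle: the whole proof reduces to an explicit evaluation and a one-sided Gronwall-type bound on $A$. The only subtle point is to make sure to use the attracting-set inclusion $A(t)\leq A_\text{max}$ (granted by Lemma \ref{Prop:Attractingset} combined with the running assumption $(\mathbf x(t),A(t))\in Q$) to upper-bound the decay rate of $A$, rather than trying to bound the growth rate of $A$ by $m$, which would produce a weaker inequality not matching \eqref{CONDORDER}.
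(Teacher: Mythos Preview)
Your proposal is correct and follows essentially the same route as the paper: both arguments compute the two reset values explicitly and then use the bound $|\dot A|\le \beta A_\text{max}$ (you use only the one-sided version $\dot A\ge -\beta A_\text{max}$, which is all that is needed) together with the algebraic equivalence $\nu\beta A_\text{max}<1\iff$ \eqref{CONDORDER} to conclude. The paper's proof is just a slightly more compressed version of the same computation.
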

\begin{proof}
 If $(\mathbf x(t),A(t))\in Q$ then we have $|\dot{A}(t)|\leq \beta A_\text{max}$. The assumption \eqref{CONDORDER} then implies $|\dot{A}(t)|<\frac1{\nu}$ which in turn yields
\[
R+\nu A(t_1-\tau)-(t_2-t_1)<R+\nu A(t_2-\tau),\ \forall t_2>t_1,
\]
from where the conclusion is immediate.
\end{proof}

\subsubsection{Return map}\label{S-RETMAP}
In every trajectory, firings must occur infinitely often and their consecutive occurrences are separated by positive time intervals. Moreover, the order preservation obtained in the previous section implies that (for $t>\max_i t_i^1=\text{max}_{\mathbf x^0}$) between any two consecutive firings of oscillator $i$, all other oscillators having repressor concentration distinct from $x_i$ must fire exactly once. Accordingly, in order to analyse the dynamics, it suffices to study the iterations of a {\bf return map} that acts on data immediately before the firing of a given oscillator, say oscillator $N$.

An expression of the return map can be computed as follows. Given $\mathbf x^0$ with $x^0_N=0$, for convenience in the sequel, we denote by $t_\text{R}$, the time $t_N^2=x_N(0^+)=R+\nu A(-\tau)$ of the second firing of $x_N$. We also assume that $\text{max}_{\mathbf x^0}<x_N(0^+)$ so that the order in which the oscillators fire is preserved from $t=0$. Accordingly, for any oscillator for which $x^0_i>0$, the corresponding repressor concentration between $t_i^1=x^0_i$ and $t_\text{R}$ is given by 
\[
x_i(t)=R+\nu A(x^0_i-\tau)-(t-x^0_i),\ \forall t\in (x^0_i,t_\text{R}].
\]
Hence, we have
\[
x_i(t_\text{R})=x^0_i+\nu \left(A(x^0_i-\tau)-A(-\tau)\right),\ \forall i\in\{1,\cdots,N-1\}.
\]
In particular, if $(\mathbf x,A)\in Q$ lies in the Poincar\'e section $x_N=0$ and satisfies the conditions of Claim \ref{CLAIMFINITEDIM}, then the return map $F_N$ writes $(\mathbf x',A')=F_N(\mathbf x,A)$ where
\begin{equation}
\left\{\begin{array}{l}
x'_i=x_{i}+\nu(\phi_m^{x_i-\tau}(A)-\phi_m^{-\tau}(A))\ \text{for}\ i\in \{1,\cdots ,N-1\}\\
A'=\phi^{R+\nu\phi^{-\tau}_{m}(A)}_m(A)
\end{array}\right.
\label{Eq:ReturnMap}
\end{equation}
In the stability analyis of periodic orbits (either fully synchronized or only partially synchronized) in the various sections below, we shall ensure that all iterates $(\mathbf x^k,A^k):=F_N^k(\mathbf x,A)$ of sufficiently small initial perturbations satisfy the conditions of Claim \ref{CLAIMFINITEDIM} (or even condition \eqref{DOMAINFN} in the case of the full synchronized orbit), so that the stability analysis actually reduces to the study of their (linearized) dynamics in $(\R^+)^N$. 

Of note, Appendix \ref{A-LIPCONT} states and proves a certain property of Lipschitz-continuous dependence of the return map on its input datum, not only when the map reduces to one of $(\R^+)^N$ but also in the case of an arbitrary datum $(\mathbf x,A|_{[-\tau,0]})$. This property will be employed in the proof of a stability criterion for partially synchronized periodic orbits in Section \ref{S-PARTSYNC}. 

\section{Existence and stability analysis of the synchronized periodic orbit}\label{S-SYNCHRO}
As a system with mean-field interactions, the equations \eqref{ODE1}-\eqref{ODE2} commute with every permutation of the repressor indices. This suggests to study the {\bf synchronized} dynamics inside the invariant subspace $x_1=x_2=\cdots =x_N$. The synchronized dynamics of the population of $N$ oscillators reduces to that of the $N=1$ system. In this section, we first investigate the corresponding return map $F_1$, a one-dimensional map that acts on the variable $A$ (since we always have $x_1=0$ along every orbit). We show that all orbits asymptotically converge to a unique fixed point $A_\text{FP}$. Then, we study the Lyapunov stability in $(\R^+)^N$ for $N\geq 2$, of the fixed point $(0,\cdots ,0,A_\text{FP})$ of the return map $F_N$. We show that this stability depends on whether the delay-parameter $\tau$ vanishes or it is positive; however and remarkably, it is independent of $N$. The results are collected in the following statement. Letting
\begin{equation}
R_\tau=R+\nu\frac{1+\beta\tau-e^{\beta\tau}}{\beta^2}\quad\text{and}\quad\nu_\tau=\nu e^{\beta\tau},
\label{Eq:DefRtaunutau}
\end{equation}
we will assume the following conditions on the parameters
\begin{equation}
R_\tau>0\quad\text{and}\quad \nu_\tau<\beta. 
\label{Eq:Cond3}
\end{equation}
\begin{Pro}
Assume that \eqref{Eq:Cond3} holds. Then the one-dimensional return map $F_1$ has a unique globally attracting fixed point $A_\text{FP}$.

\noindent
Assume that \eqref{CONDORDER} also holds. 

\noindent
(i) If $\tau=0$, then for every $N\geq 2$, the corresponding fixed point $(0,\cdots,0,A_\text{FP})$ of $F_N$ is unstable in $(\R^+)^N$.

\noindent
(ii) There exists $\tau_0\in \R^+_\ast$ such that for every $\tau\in (0,\tau_0)$ and every $N\geq 2$, the fixed point $(0,\cdots,0,A_\text{FP})$ of $F_N$ is locally asymptotically stable in $(\R^+)^N$.
\label{MAINRES}
\end{Pro}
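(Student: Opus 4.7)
I would first establish Part 1 (unique globally attracting fixed point $A_\text{FP}$ of $F_1$) by explicit computation. Applying \eqref{Eq:ReturnMap} with $N=1$, the only oscillator contributes $m(-s)=s$ on $[-\tau,0]$, so \eqref{BCKVC} gives $\phi_m^{-\tau}(A)=Ae^{\beta\tau}+(\beta\tau+1-e^{\beta\tau})/\beta^2$ and hence the reset value $c:=R+\nu\phi_m^{-\tau}(A)=R_\tau+\nu_\tau A$. Then \eqref{Eq:EvforA} with $m(s)=c-s$ on $[0,c]$ yields $F_1(A)=e^{-\beta c}A+g(c)$ where $g(c):=\int_0^c u\,e^{-\beta u}\,du$. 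Differentiating, $F_1'(A)=e^{-\beta c}\bigl(1+\nu_\tau(c-\beta A)\bigr)$, and the elementary bound $e^{\beta c}\geq 1+\beta c\geq 1+\nu_\tau(c-\beta A)$ (using $\nu_\tau<\beta$ from \eqref{Eq:Cond3} and $A,c\geq 0$), together with an analogous estimate when $1+\nu_\tau(c-\beta A)<0$, yields $|F_1'(A)|<1$ uniformly on $\R^+$. Hence $F_1$ is a strict contraction on $\R^+$, giving the unique globally attracting fixed point $A_\text{FP}$.

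Next, I would linearize $F_N$ at $(0,\ldots,0,A_\text{FP})$ for $N\geq 2$. For a perturbation $(\epsilon_1,\ldots,\epsilon_{N-1},0,A_\text{FP}+\delta)$ with $\epsilon_i\geq 0$, formula \eqref{Eq:ReturnMap} gives $x_i'-\epsilon_i=\nu\int_{-\tau}^{\epsilon_i-\tau}\dot A(s)\,ds$. Since both the integration length and the correction to $\dot A$ relative to the fixed-point dynamics are first order in the perturbation, the integral equals $\epsilon_i\,\dot A^{\text{FP}}(-\tau)+O(\epsilon_i\,\|(\epsilon,\delta)\|)$, so $\epsilon_i'=K_\tau\,\epsilon_i+O(2)$ with $K_\tau:=1+\nu\,\dot A^{\text{FP}}(-\tau)$. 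On the synchronized subspace $\epsilon=0$ the dynamics reduces to $F_1$, so the Jacobian of $F_N$ at the fixed point is block-triangular with eigenvalues $K_\tau$ (multiplicity $N-1$) and $\mu_\tau:=F_1'(A_\text{FP})$ (multiplicity $1$). Since $|\mu_\tau|<1$ by the first paragraph, both stability claims reduce to the analysis of $K_\tau$, and in particular the answer is seen to be independent of $N$.

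The qualitative difference between (i) and (ii) comes from evaluating $\dot A^{\text{FP}}$ on opposite sides of the firing event at $t=0$. At $\tau=0$, the reset uses $A$ at future times, so $\dot A^{\text{FP}}(0^+)=T-\beta A_\text{FP}$ with $T:=R+\nu A_\text{FP}$, giving $K_0=1+\nu(T-\beta A_\text{FP})$. Solving the fixed-point equation $A_\text{FP}=F_1(A_\text{FP})$ explicitly yields $\beta A_\text{FP}=1/\beta-T/(e^{\beta T}-1)$, and combining this with the elementary inequality $1-e^{-\beta T}<\beta T$ gives $\beta A_\text{FP}<T$, hence $K_0>1$, proving (i). For $\tau>0$ the reset uses $A$ at past times in $(-\tau,0)$, which is a non-firing interval of the fixed-point dynamics, so $\dot A^{\text{FP}}(-\tau)=\tau-\beta A^{\text{FP}}(-\tau)$ (using $m^{\text{FP}}(-\tau)=\tau$ by periodicity). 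Taking $\tau\to 0^+$, continuity yields $K_\tau\to 1-\nu\beta A_\text{FP}^0$; since the analogous explicit formula gives $\beta A_\text{FP}^0<1/\beta$ and $\nu<\beta$, this limit lies in $(0,1)$, so $|K_\tau|<1$ for $\tau\in(0,\tau_0)$ for some $\tau_0>0$, proving (ii).

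The main technical hurdle is to justify the linearization rigorously: $F_N$ is defined piecewise with firing times that shift under perturbation, so $m$ and $\dot A$ carry jump discontinuities whose locations depend nonlinearly on the input, and one must show that the formal linearization genuinely captures the map to first order. The Lipschitz-continuity statement in Appendix \ref{A-LIPCONT} should supply this control. Conceptually, the mechanism is that $K_\tau$ has a jump of size $\nu T$ at $\tau=0$: an infinitesimal delay lets the perturbation probe $\dot A^{\text{FP}}$ on the past side of each firing event (slope $-\beta A_\text{FP}$, bounded), rather than the future side (slope $T-\beta A_\text{FP}$, much larger), and this asymmetry is precisely the stabilizing role of the delay.
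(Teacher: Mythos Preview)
Your treatment of the one-dimensional map $F_1$ via a global contraction estimate is a legitimate alternative to the paper's argument (which instead uses that $F_1$ is increasing and concave on $[A_0(\tau),A_\tau]$ and maps this interval strictly into itself); your route is slicker, though the ``analogous estimate'' for the case $1+\nu_\tau(c-\beta A)<0$ is not spelled out and deserves a line. Your analysis of part~(ii) is essentially the paper's: for $\tau>0$ and $\epsilon_i<\tau$ the integration interval $[-\tau,\epsilon_i-\tau]$ lies in a non-firing region, so $\dot A$ there is genuinely $\dot A^{\text{FP}}(-\tau)+O(\|(\epsilon,\delta)\|)$, and the limit $K_\tau\to 1-\nu\beta A_{\text{FP}}\in(0,1)$ follows.

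Part~(i), however, has a real gap. Your key approximation ``the integral equals $\epsilon_i\,\dot A^{\text{FP}}(-\tau)+O(\epsilon_i\|(\epsilon,\delta)\|)$'' fails at $\tau=0$. For $\tau=0$ the relevant interval is $(0,\epsilon_i)$, and on this interval the perturbed trajectory has \emph{not all} oscillators fired: if one perturbs along the direction $\epsilon_i>0$, $\epsilon_j=0$ for $j\neq i$, then $N-1$ oscillators fire at $t=0$ and $m(0^+)=\tfrac{N-1}{N}(R+\nu A)+\tfrac{\epsilon_i}{N}$, which differs from $m^{\text{FP}}(0^+)=R+\nu A_{\text{FP}}$ by $\tfrac{T}{N}+O(\epsilon)$, an $O(1)$ quantity. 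Hence $\dot A(0^+)\neq \dot A^{\text{FP}}(0^+)+O(\epsilon)$, and the actual multiplier along this direction is
\[
1+\nu\Bigl(\tfrac{N-1}{N}(R+\nu A_{\text{FP}})-\beta A_{\text{FP}}\Bigr),
\]
not $1+\nu(T-\beta A_{\text{FP}})$. In particular the multiplier \emph{does} depend on $N$, contrary to your claim, and your inequality $T>\beta A_{\text{FP}}$ (equivalent to the trivial $A_{\text{FP}}<A_{\max}$) is too weak: one needs $\tfrac{N-1}{N}T>\beta A_{\text{FP}}$, i.e.\ $A_{\text{FP}}<\tfrac{R}{2\beta-\nu}$ in the worst case $N=2$. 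The paper establishes this by checking $F_1\bigl(\tfrac{R}{2\beta-\nu}\bigr)<\tfrac{R}{2\beta-\nu}$, which reduces to $\tanh x<x$. Your nice mechanistic picture (the delay shifts the probe of $\dot A^{\text{FP}}$ across the firing jump) is exactly right, but the $\tau=0$ side of that jump must be computed in the \emph{perturbed} trajectory, and this costs you the $\tfrac{N-1}{N}$ factor and the nontrivial estimate on $A_{\text{FP}}$.
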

This statement confirms the numerical observations reported in \cite{MHT14} about the delay-dependence of the stability of the synchronized periodic orbit associated with $A_\text{FP}$.\footnote{In \cite{MHT14}, the repressor resets $R+\nu A(t-\tau)$ are perturbed by a (small) additive random noise. Our result shows that the stability of the synchronized periodic orbit is not affected by these random fluctuations.} An illustration is given in Fig. \ref{TIMESERIES}. 
\begin{figure}[ht]
\begin{center}
\includegraphics*[width=75mm]{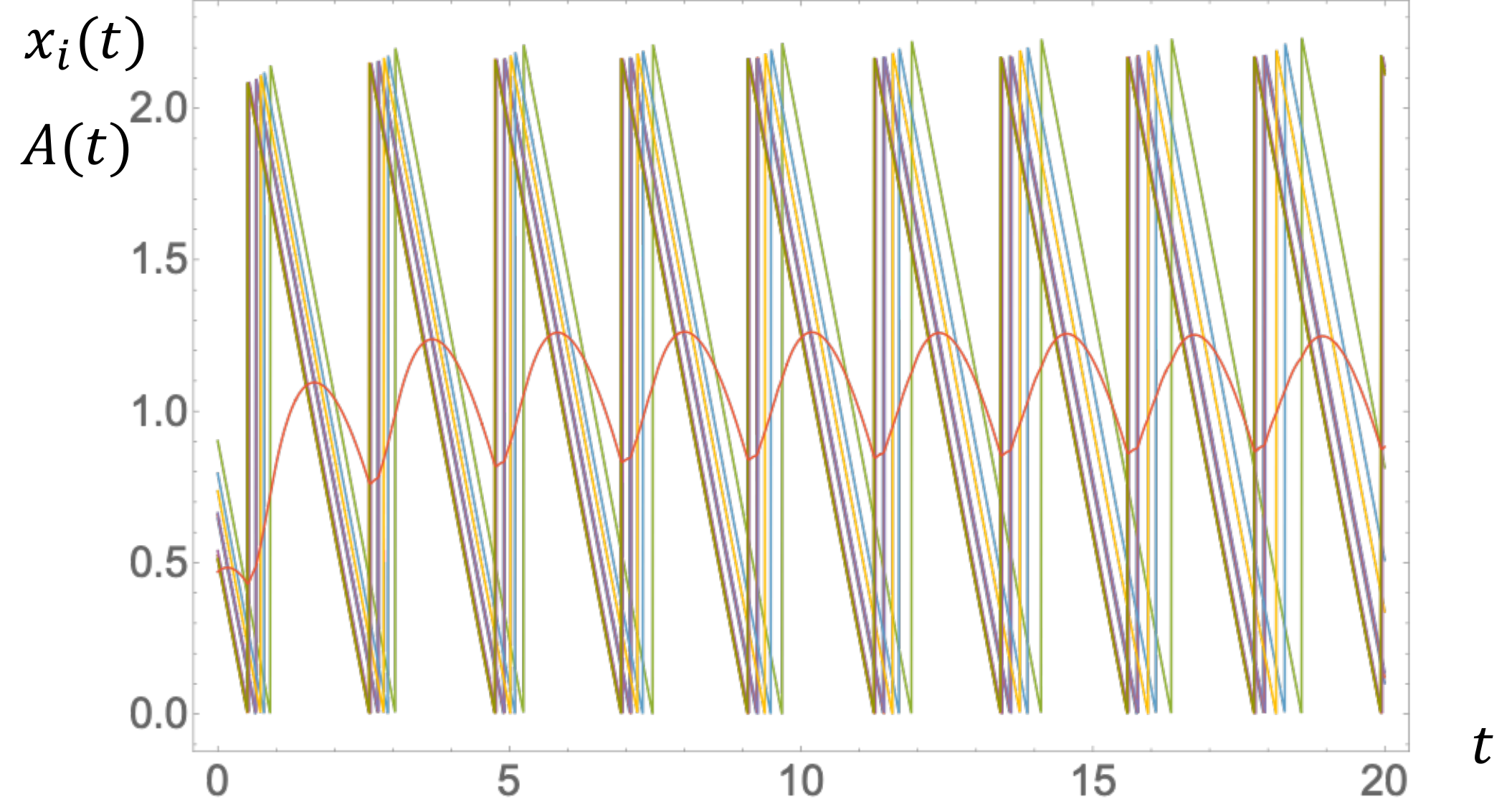}
\hspace{0.9cm}
\includegraphics*[width=75mm]{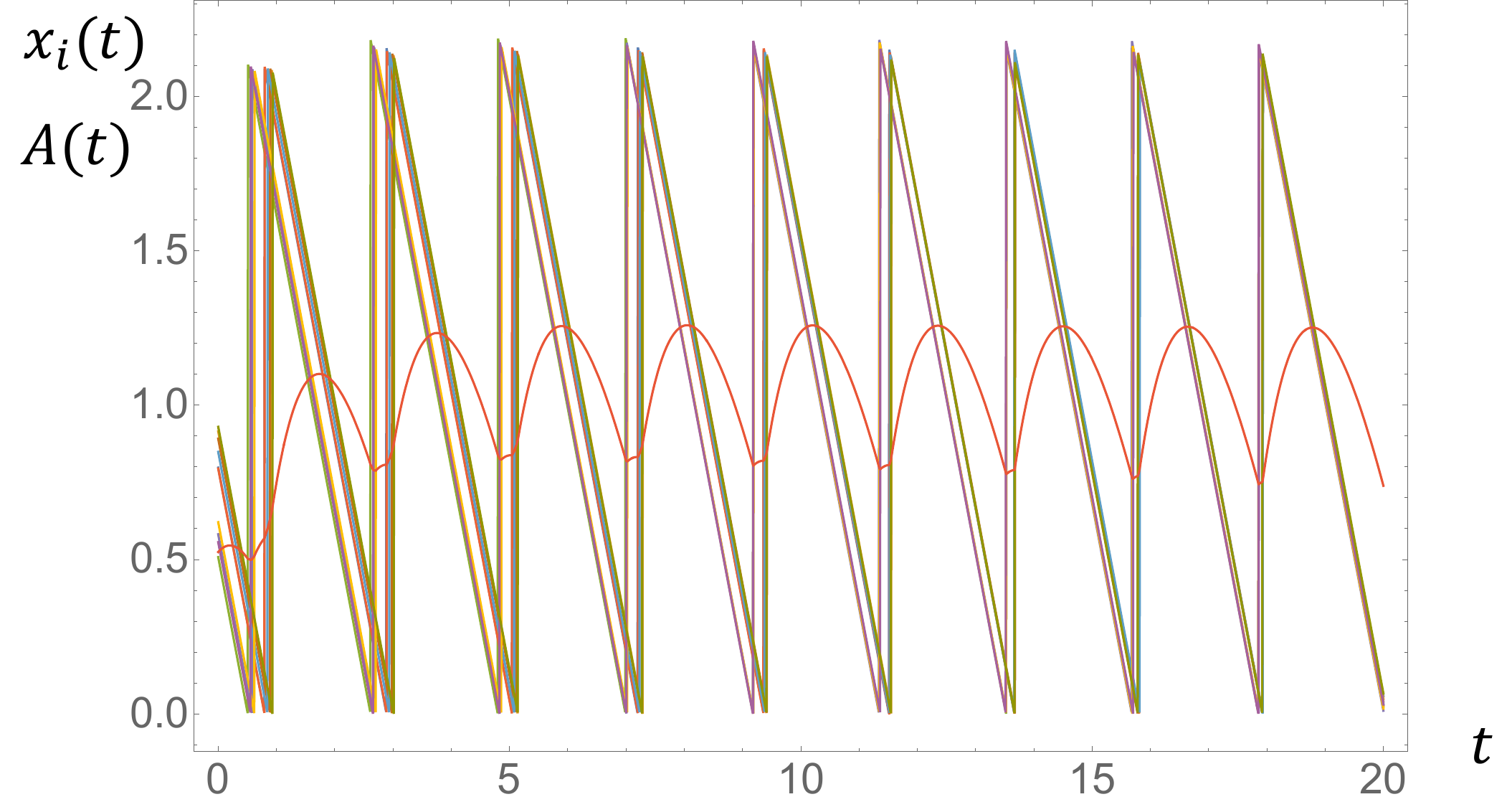}
\end{center}
\caption{Time series of two trajectories for $N=10, t\in [0,20]$ and $\tau=0$ (left)/$\tau=0.2$ (right). The other parameters are $R=2,\beta=1$, and $\nu=0.2$. The sawtooth series correspond to the repressor concentrations $x_i(t)$ ($i\in\{1,\cdots ,10\}$) and the central series in red color corresponds to the activator concentration $A(t)$. On the left picture, the instability of the synchronized periodic orbit is marked although rather weak. On the right picture, the asymptotic stability is more evident.}
\label{TIMESERIES}
\end{figure}

The rest of this section is devoted to the proof of Proposition \ref{MAINRES}.

\subsection{Existence of a globally attracting fixed point of the map $F_1$}
 \begin{figure}
\begin{center}
\includegraphics*[scale=0.35]{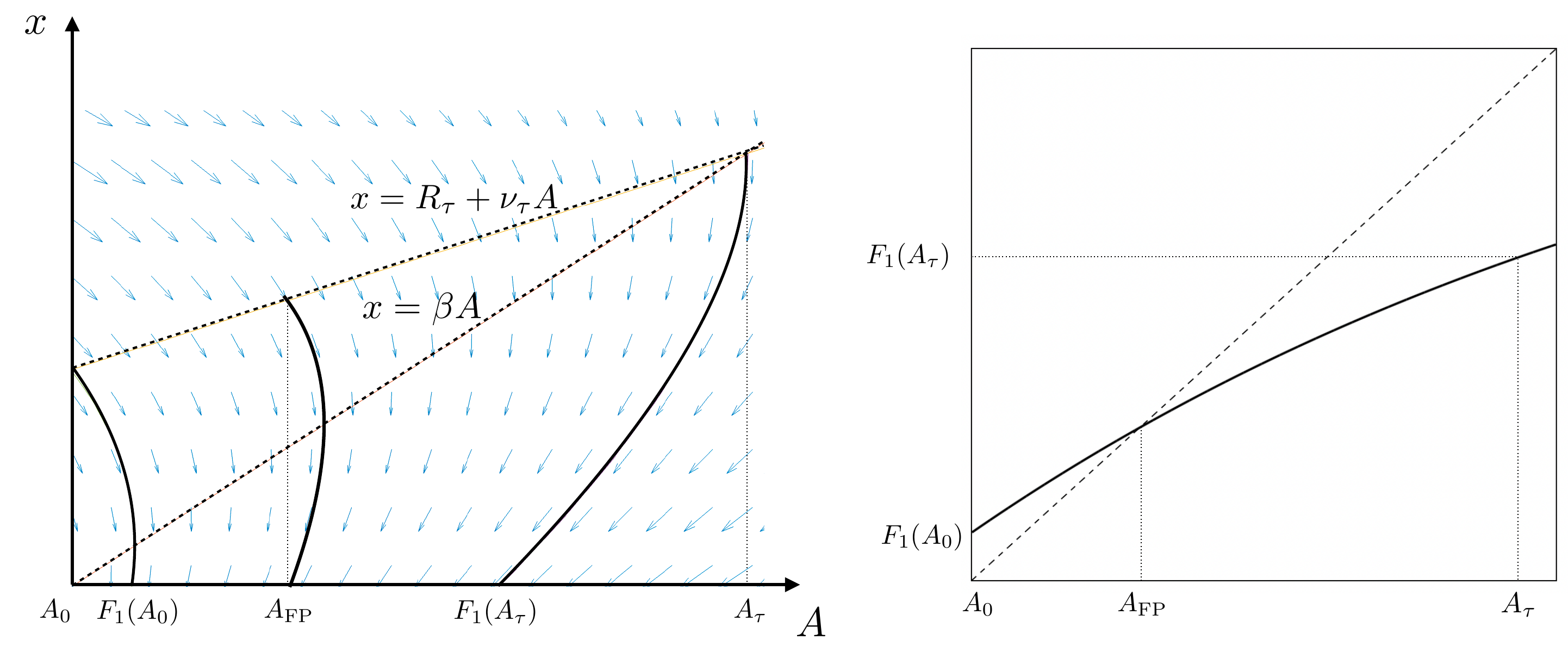}
\end{center}
\caption{\emph{Left.} Illustration of the vector field (light blue arrows) and some segments of trajectories (solid black curves) of the synchronized dynamics/system with one oscillator ($N=1$), under the assumptions \eqref{CONDORDER} and \eqref{Eq:Cond3}. Initial conditions: $(A_0(\tau),R_\tau+\nu_\tau A_0(\tau))$, $(A_\text{FP},R_\tau+\nu_\tau A_\text{FP})$, $(A_\tau,R_\tau+\nu A_\tau)$; where the orbit of $(A_\text{FP},R_\tau+\nu_\tau A_\text{FP})$ is the unique attracting periodic orbit. \emph{Right.} Graph of the corresponding return map $F_{1}$.}
\label{Fig:FlowPlusReturnMap}
\end{figure}

For $N=1$, the condition \eqref{DOMAINFN} reduces to 
\[
A\geq A_0(\tau)=\frac{1-(1+\beta\tau)e^{-\beta\tau}}{\beta^2}.
\]
Moreover, using \eqref{BCKVC} with $m(-s)=s$ for $s\in [0,\tau]$, we obtain that the reset concentration at any firing is given by (NB: see \eqref{Eq:DefRtaunutau} for the definition of $R_\tau$ and $\nu_\tau$ and see Fig.\ \ref{Fig:FlowPlusReturnMap} for an illustration)
\begin{equation*}
R+\nu\phi^{-\tau}_m(A)=R_\tau+\nu_\tau A,
\end{equation*}
Therefore, using \eqref{Eq:EvforA} with $m(s)=R_\tau+\nu_\tau A-s$ for all $s\in (0,R_\tau+\nu_\tau A]$ yields the following explicit expression for the return map $F_1(A)=\phi^{R+\nu\phi^{-\tau}_m(A)}_m(A)$
\[
F_1(A)=\frac1{\beta^2}+\left(A(1-\frac{\nu_\tau}{\beta})-\frac{R_\tau}{\beta}-\frac1{\beta^2}\right)e^{-\beta(R_\tau+\nu_\tau A)}.
\]
Letting  $A_\tau:=\frac{R_\tau}{\beta-\nu_\tau}$, the assumption \eqref{Eq:Cond3} implies $A_0(\tau)<A_\tau$. Clearly, this assumption also implies that the asymptotic dynamics of $F_1$ must lie in the interval $[A_0(\tau),A_\tau]$, again see Fig.\ \ref{Fig:FlowPlusReturnMap} for illustration. The following features then yield that $F_1$ must have a unique globally attracting fixed point inside this interval, proving the preliminary claim in Proposition \ref{MAINRES}:
\begin{itemize}
\item $F_1(A_0(\tau))=\frac1{\beta^2}-\left(\frac{(1+\beta\tau )}{\beta^2}e^{-\beta \tau}+\frac{R}{\beta}\right)e^{-\beta R}>A_0(\tau)$ for every $\tau\in [0,R)$.
\item $F_1(A_\tau)=\frac1{\beta^2}\left(1-e^{-\beta^2A_\tau}\right)<A_\tau$, so that altogether $F_{1}([A_0(\tau),A_\tau])\subsetneq [A_0(\tau),A_\tau]$.
\item $F_1'(A_\tau)>0$ and $F_1''|_{[0,A_\tau]}<0$, viz.\ $F_1|_{[0,A_\tau]}$ is increasing with decreasing derivative.
\end{itemize}

\subsection{Proof of instability for $\tau=0$}
In this section, we prove item {\em (i)} in Proposition \ref{MAINRES}. Consider the restriction of $F_N$ to the subspace of 2-cluster states for which $N-1$ repressor concentrations are equal, more precisely, the restriction to those $\mathbf x$ for which $x_1>0$ and $x_2=\cdots =x_N=0$. We are going to show that $(0,\cdots ,0,A_\text{FP})$ is linearly repelling along the direction of $x_1$. 

Letting $\epsilon>0$ sufficiently small, assume that 
\[
x_1\in (0,\epsilon)\ \text{and}\ |A-A_\text{FP}|<\epsilon.
\]
In particular, $\epsilon$ must be small enough so that such $(\mathbf x,A)$ satisfies the condition \eqref{DOMAINFN}.

Given that  $x_2=\cdots =x_N=0$, we have for the continuous time trajectory
\[
m(t)=\frac{(N-1)(R+\nu A)+x_1}{N}-t,\ \forall t\in (0,x_1],
\]
which yields using \eqref{Eq:EvforA} and after simple algebra
\[
\phi^{x_1}_{m}(A)=A+x_1\left(\frac{N-1}{N}(R+\nu A)-\beta A\right)+ {\cal O}(x_1^2).
\]
From \eqref{Eq:ReturnMap}, we obtain the following expansion for the coordinate $x'_1$ of the first iterate $(\mathbf x',A')=F_N(\mathbf x,A)$
\[
x'_1=x_1+\nu(\phi^{x_1}_{m}(A)-A)=x_1\left(1+\nu\left(\frac{N-1}{N}(R+\nu A)-\beta A\right)\right)+ {\cal O}(x_1^2).
\] 
Therefore, in order to prove the desired instability, all we have to show is $\frac{N-1}{N}(R+\nu A)-\beta A>0$ for all $|A-A_\text{FP}|<\epsilon$ with $\epsilon$ small. By continuity, it suffices to show that $\frac{N-1}{N}(R+\nu A_\text{FP})-\beta A_\text{FP}>0$.
Since $\frac{N-1}{N}\geq \frac12$ for all $N\geq 2$, it suffices to show that this inequality holds for $N=2$, {\sl viz.}\ 
\[
A_\text{FP}<\frac{R}{2\beta-\nu}.
\]
Given the properties of $F_{1}$ described in the previous subsection, in order to prove that inequality, it suffices to verify that $F_{1}(\frac{R}{2\beta-\nu})<\frac{R}{2\beta-\nu}$. Explicit computations show that the sign of $F_{1}( \frac{R}{2\beta-\nu})-\frac{R}{2\beta-\nu}$ is the same as the one of $\tanh\left(\frac{\beta^2R}{2\beta-\nu}\right)-\frac{\beta^2R}{2\beta-\nu}$, which is negative for every $R,\beta\in\R^+$ and $\nu\in (0,\beta)$. This concludes the proof of instability for $\tau=0$. \hfill $\Box$

\subsection{Proof of stability for $\tau>0$}\label{S-PSTABSYNC}
We first give some heuristic for the synchronization mechanism when $\tau>0$. Given a small initial perturbation $(\mathbf x,A)$ of the fixed point, the vector field in \eqref{ODE2}, when computed at instants immediately before the first firing (which occurs at time 0) must be negative because $m(t)$ must be close to $0$ and $A(t)$ must be close to $A_\text{FP}$, which is positive. Therefore for $\tau>0$ small enough and $x_i\in (0,\tau)$, since there cannot be any firing in the interval $[-\tau,x_i-\tau]$, we must have
\[
\phi_m^{x_i-\tau}(A)=\phi_m^{-\tau}(A)- C'x_i+\ \text{h.o.t.},
\]
for some $C'\in \R^+_\ast$, which implies that the image $x'_i$ of $x_i$ under $F_N$ is given by
\[
x'_i=x_i+\nu (\phi_m^{x_i-\tau}(A)-\phi_m^{-\tau}(A))=x_i(1-\nu C')+\ \text{h.o.t.}
\]
Thus, we must have $x'_i<x_i$ when $(\mathbf x,A)$ is sufficiently close to $(0,\cdots,0,A_\text{FP})$, which together with the fact that $x'_i>0$ (from order preservation), implies the desired synchronisation.
 
\paragraph{Proof of item {\sl (ii)} in Proposition \ref{MAINRES}} We are going to show that for every $N\geq 2$, the map $F_N$ is a contraction in a small neighbourhood of $(0,\cdots ,0,A_\text{FP})$ in $(\R^+)^N$ when $\tau>0$ is small enough. Given $\tau\in (0,R)$, let $(\mathbf x,A)$ with $x_N=0$, and $\|\mathbf x\|_{N-1},|A-A_\text{FP}|$ small. In particular, we assume that $\|\mathbf x\|_{N-1}<\tau$ and that $(\mathbf x,A)$ satisfies the condition \eqref{DOMAINFN}.

Let $i\in\{1,\cdots ,N-1\}$. The condition \eqref{DOMAINFN} ensures that no firing occurs in the time interval $[-\tau,0)$; hence 
\[
m(t)=m(0)-t\quad\text{for}\quad t\in [-\tau,x_i-\tau],
\]
from where we obtain using \eqref{Eq:EvforA}, after simple algebra
\[
\phi^{x_i-\tau}_m(A)=\left(\phi^{-\tau}_m(A)+\int_{0}^{x_i}e^{\beta s}m(s-\tau)ds\right)e^{-\beta x_i}=\phi^{-\tau}_m(A)-x_i\left(\beta \phi^{-\tau}_m(A)-\tau\right)+ {\cal O}(\|\mathbf x\|_{N-1}^2)
\]
and thus, the following expansion results for the coordinate $x'_i$ of the first iterate $(\mathbf x',A')=F_N(\mathbf x,A)$
\begin{align*}
x'_i&=x_i\left(1-\nu \left(\beta \phi^{-\tau}_m(A)-\tau\right)\right)+ {\cal O}(\|\mathbf x\|_{N-1}^2)\\
&=x_i(1-\nu \beta A_\text{FP}+(\beta(A-\phi^{-\tau}_m(A))-\tau))+\nu \beta x_i(A_\text{FP}-A)+ {\cal O}(\|\mathbf x\|_{N-1}^2)
\end{align*}
That no firing occurs in the interval $[-\tau,0)$ implies that $\phi^{-\tau}_m$ is close to the identity when $\tau$ is small. Therefore, there exists $K_\tau>0$ with $\lim_{\tau\to 0} K_\tau=0$ such that 
\[
|\beta(A-\phi^{-\tau}_m(A))-\tau|\leq K_\tau
\]
for all $A$ uniformly bounded, and in particular when $|A-A_\text{FP}|$ is small. In addition, order preservation, which must hold for the initial conditions $(\mathbf x,A)$ under consideration here, implies that $x'_i\geq 0$ for all $i$. Letting $\tau\to 0$ and $A\to A_\text{FP}$, we must have $1-\nu \beta A_\text{FP}\geq 0$. Altogether, this implies the existence of $\gamma_1\in (0,1)$ such that 
\[
\|\mathbf x'\|_{N-1}\leq \left(\gamma_1+\nu\beta |A-A_\text{FP}|\right)\|\mathbf x\|_{N-1},
\]
when $\tau,\|\mathbf x\|_{N-1}$ and $|A-A_\text{FP}|$ are sufficiently small.

In order to control the dynamics of the activator variable, we are going to show that $A'$ is close to $F_1(A)$ when $\|\mathbf x\|_{N-1}$ is small and then use that $F_1$ is a contraction in the neighbourhood of $A_\text{FP}$ in $\R$. Recalling the notation $t_\text{R}=R+\nu \phi_m^{-\tau}(A)$ for the return time and letting $t_\text{R}^\text{sync}=R+\nu \phi_{m_\text{sync}}^{-\tau}(A)$ and $m_\text{sync}$ for the quantities associated with the synchronized trajectory issued from $(0,A)$ at $t=0$, we observe that together with \eqref{BCKVC}, the expressions
\[
m(t)=\frac{1}{N}\sum_{i=1}^{N-1}x_i-t\quad \text{and}\quad m_\text{sync}(t)=-t\quad \text{for}\ t\in [-\tau,0],
\]
result in 
\[
t_\text{R}-t_\text{R}^\text{sync}=-\nu \frac{e^{\beta\tau}-1}{\beta N}\sum_{i=1}^{N-1}x_i<0.
\]
Accordingly, we obtain from \eqref{Eq:EvforA}
\begin{align*}
A'=&F_1(A)+\left(A+\int_0^{t_\text{R}}e^{\beta s}m_\text{sync}(s)ds\right)(e^{-\beta t_\text{R}}-e^{-\beta t_\text{R}^\text{sync}})+\int_0^{t_\text{R}}e^{\beta (s-t_\text{R})}(m(s)-m_\text{sync}(s))ds\\
&+\int_{t_\text{R}}^{t_\text{R}^\text{sync}}e^{\beta (s-t_\text{R}^\text{sync})}m_\text{sync}(s)ds
\end{align*}
The expression of $t_\text{R}-t_\text{R}^\text{sync}$ above implies that the second and last terms in the RHS are controlled by $\|\mathbf x\|_{N-1}$, also because all quantities $A,t_\text{R},t_\text{R}^\text{sync}$ and $m_\text{sync}$ are bounded when in (or close to) the attracting set. 

In order to control the third term, we first observe that it suffices to provide an estimate of the integral between time $t=\|x\|_{N-1}$ (namely the time of the last first firing of all oscillators) and $t=t_\text{R}$ because the remaining integral can be controlled by the same arguments as before. Moreover, we have 
\[
m(s)=R+\frac{\nu}{N}\sum_{i=1}^N\phi_m^{x_i-\tau}(A)+\frac{1}{N}\sum_{i=1}^{N-1}x_i-s\quad\text{and}\quad m_\text{sync}(s)=R+\nu \phi_{m_\text{sync}}^{-\tau}(A)-s\quad \text{for}\ s\in [\|x\|_{N-1},t_\text{R}]
\]
and again from \eqref{Eq:EvforA}
\[
|\phi_m^{x_i-\tau}(A)-\phi_m^{-\tau}(A)|\leq \phi_m^{-\tau}(A)(1-e^{-\beta x_i})+\|m|_{[-\tau,x_i-\tau]}\|_0\frac{e^{\beta x_i}-1}{\beta}.
\]
In addition, from the expressions of $m$ and $m_\text{sync}$ above, we have 
\[
\phi_m^{-\tau}(A)=\phi_{m_\text{sync}}^{-\tau}(A)-\frac{e^{\beta\tau}-1}{\beta N}\sum_{i=1}^{N-1}x_i.
\]
Combining all the estimates above, we finally conclude about the existence of $K\in\R^+$ such that
\[
|A'-A_\text{FP}|\leq |F_1(A)-A_\text{FP}|+K\|\mathbf x\|_{N-1}+ {\cal O}(\|\mathbf x\|_{N-1}^2)
\]
when $\|\mathbf x\|_{N-1}$ is sufficiently small. Besides, the analysis of the map $F_1$ in a previous section implies the existence of $\gamma_2\in (0,1)$ such that 
\[
|F_1(A)-A_\text{FP}|\leq \gamma_2|A-A_\text{FP}|
\]
when $A$ is sufficiently close to $A_\text{FP}$. 

Altogether, when $\tau,\|\mathbf x\|_{N-1}$ and $|A-A_\text{FP}|$ are sufficiently small, the return dynamics of $\|\mathbf x\|_{N-1}$ and $|A-A_\text{FP}|$ is dominated by a matrix whose eigenvalues are close to $\gamma_1$ and $\gamma_2$. Hence, the map $F_N$ must be a contraction (for an appropriate norm in $\R^N$) in the neighbourhood of $(0,\cdots, 0,A_\text{FP})$, as desired. In particular, we are sure that $(\mathbf x',A')$ also satisfies condition \eqref{DOMAINFN} provided that $\|\mathbf x\|_{N-1},|A-A_\text{FP}|$ are sufficiently small; hence the argument can be repeated to conclude about asymptotic stability of $(0,\cdots, 0,A_\text{FP})$. \hfill $\Box$

\section{Stability analysis for partially synchronized periodic orbits}\label{S-PARTSYNC}
The analysis in the previous section and its arguments are not limited to synchronized trajectories. They extend to partially synchronized periodic orbits. A {\bf partially synchronized trajectory} (whether it is periodic or not) is a solution of the equations \eqref{ODE1}-\eqref{ODE2} for which two or more repressor concentrations are equal at all times (NB: this is the case iff the concentrations are equal at $t=0$). 

For an arbitrary fixed point of the return map in a given partially synchronized subspace (see next section for an accurate definition), we establish a stability criterion for $\tau>0$ (Lemma \ref{STABCRIT}) and an instability criterion for $\tau=0$ (Lemma \ref{INSTABCRIT}). For simplicity, we only consider stability with respect to initial perturbations that affect a single cluster of the periodic orbit under investigation. The stability with respect to perturbations that affect several clusters are direct extensions that are left to the interested reader. In addition, we also consider initial conditions that are given by the datum $(\mathbf x, A|_{[-\tau,0]})$, even though we have argued that only finite many values of $A$ suffice in order to define any trajectory. 

In a second step, we apply these criteria to fixed points that are exponentially stable in their own partially synchrony subspace for $\tau=0$. The result for the continued fixed point for $\tau>0$ small can be regarded as an extension of Proposition \ref{MAINRES} to the family under consideration: while for $\tau=0$, the orbit is unstable with respect to perturbations that smear its clusters, it becomes stable against the same perturbations for every positive (and small) value of $\tau$ (Theorem \ref{STABPARTSYNC}). An example of application to fixed points with two clusters with equi-distributed repressor concentrations is given in Lemma \ref{TWOCLUST}.

\subsection{Characteristics of partially synchronized trajectories and of fixed points of the corresponding return map} 
By grouping the oscillators with equal repressor concentration into one cluster, the population at every instant can be described by the vector $\left(\{n_k,y_k\}_{k=1}^K,A\right)$ where $n_k\in\{1,\cdots ,N\}$ denotes the size of the cluster $k$ and $y_k$ the corresponding repressor concentration ($K\leq N$ is the total number of clusters and we have $\sum_{k=1}^Kn_k=N$).\footnote{The variables $y_k$ can be formally defined as follows. For each $i\in \{1,\cdots ,N\}$, there exists $k_i\in \{1,\cdots ,K\}$ such that $y_{k_i}=x_i$ and $x_{j}\neq x_i$ iff $y_{k_j}\neq y_{k_i}$.} From the equations \eqref{ODE1}-\eqref{ODE2}, it follows that the {\bf cluster distribution} $\{n_k\}_{k=1}^K$ remains constant in every trajectory; hence we may consider separately the dynamics in each subspace, called {\bf partially synchronized subspace}, for which this distribution is given. Every trajectory for which $K<N$ is called {\bf partially synchronized}. Obviously, the synchronized dynamics in Section \ref{S-SYNCHRO} is a particular case of partially synchronized subspace ($K=1$).

Let $N\in\N$, $K\in\{1,\cdots ,N-1\}$ and a cluster distribution $\{n_k\}_{k=1}^K$ be given. For the sake of the presentation, we only consider those partially synchronized periodic orbits  that are fixed points 
\[
\left(\{n_k,y^\text{FP}_k\}_{k=1}^{K},A^\text{FP}|_{[-\tau,0]}\right)
\]
of the return map to the Poincar\'e section $y_K=0$. In other words, we assume that the period $T_\text{FP}>0$ coincides with the return time. However the analysis developed below extends to arbitrary periodic orbits of such map, without additional conceptual difficulties.  

Notice also that the labelling of the clusters is irrelevant because of the permutation symmetry. For convenience, we assume that this labelling has been chosen so that the fixed point repressor concentrations $y_k^\text{FP}$ are ordered {\sl ie.}\
\[
y_1^\text{FP}>y_2^\text{FP}>\cdots >y_{K-1}^\text{FP}>0.
\]
Furthermore, the expression \eqref{Eq:EvforA} and the fact that the non-negative function $m^\text{FP}$ certainly does not entirely vanish over $[0,T_\text{FP}]$, impose that, in any partially synchronized periodic orbit, the activator function $A^\text{FP}$ must be positive.

\subsection{Considerations about stability} 
As technical considerations about stability are concerned, focus will be made on the strongest form of local Lyapunov stability, namely the exponential stability that results when the return dynamics of small perturbations is a contraction in an appropriate setting (space and norm). In particular, a prerequisite for our stability criterion will be that the periodic orbit is exponentially stable inside its proper partially synchronized space. The criterion will then ensure exponential stability in a higher dimensional space that contains perturbations that smear the clusters. More precisely, we shall deal with the following notions (NB: Throughout, the symbol $\mathbf y$ denotes the collection $\{y_k\}_{k=1}^{K}$).
\begin{Def}
(i) The fixed point $\left(\{n_k,y^\text{FP}_k\}_{k=1}^{K},A^\text{FP}|_{[-\tau,0]}\right)$ is said to be {\bf exponentially stable inside its proper partially synchronized subspace} if there exist $\epsilon_0\in \R_\ast^+$ and $\gamma\in (0,1)$ such that for every $\epsilon\in (0,\epsilon_0)$ and every initial datum $\left(\{n_k,y_k\}_{k=1}^{K},A|_{[-\tau,0]}\right)$ such that
\[
y_{K}=0\quad\text{and}\quad\max\left\{\|\mathbf y-\mathbf y^\text{FP}\|_{K-1},\|A|_{[-\tau,0]}-A^\text{FP}|_{[-\tau,0]}\|_0\right\}\leq \epsilon,
\]
we have for the subsequent trajectory $t\mapsto \left(\{n_k,y_k(t)\}_{k=1}^{K},A(t)\right)$ of the system \eqref{ODE1}-\eqref{ODE2}
\[
\max\left\{\|\mathbf y(t_\text{R})-\mathbf y^\text{FP}\|_{K-1},\|A|_{[t_\text{R}-\tau,t_\text{R}]}-A^\text{FP}|_{[-\tau,0]}\|_0\right\}\leq \gamma \epsilon,
\]
where the return time $t_\text{R}$ defined in Section \ref{S-RETMAP} corresponds here to the instant of the second firing of the oscillators in cluster $K$.
\medskip

\noindent
(ii) The fixed point $\left(\{n_k,y^\text{FP}_k\}_{k=1}^{K},A^\text{FP}|_{[-\tau,0]}\right)$ is said to be {\bf exponentially stable with respect to small perturbations that smear cluster $K$} if there exist $\epsilon_0,C_1,C_2\in \R_\ast^+$ and $\gamma\in (0,1)$ such that for every $\epsilon\in (0,\epsilon_0)$, every $K'\in \{K+1,\cdots ,N\}$, every cluster distribution $\{n'_k\}_{k=1}^{K'}$ such that $n'_k=n_k$ for $k\in\{1,\cdots ,K-1\}$, and every initial datum $\left(\{n'_k,y_k\}_{k=1}^{K'},A|_{[-\tau,0]}\right)$ such that $y_{K'}=0$ and  
\[
\max\left\{\|\mathbf y-\mathbf y^\text{FP}\|_{K-1},C_1\max_{k\in \{K,\cdots ,K'-1\}}y_k,\|A|_{[-\tau,0]}-A^\text{FP}|_{[-\tau,0]}\|_0\right\}\leq \epsilon,
\]
we have for the subsequent trajectory $t\mapsto \left(\{n'_k,y_k(t)\}_{k=1}^{K'},A(t)\right)$ of the system \eqref{ODE1}-\eqref{ODE2}
\[
\max\left\{\|\mathbf y(t^n_\text{R})-\mathbf y^\text{FP}\|_{K-1},C_1\max_{k\in \{K,\cdots ,K'-1\}}y_k(t^n_\text{R}),\|A|_{[t^n_\text{R}-\tau,t^n_\text{R}]}-A^\text{FP}|_{[-\tau,0]}\|_0\right\}\leq C_2\gamma^n \epsilon,\ \forall n\in\N
\]
where the $n$th return time $t^n_\text{R}$ is the instant of the $(n+1)$th firing of the oscillators in cluster $K'$.
\end{Def}
Anticipating the comment after Lemma \ref{STABCRIT}, notice that exponential stability inside the proper partially synchronized subspace does not depend on the phase of the periodic orbit under consideration, {\sl viz.}\ the fixed point $\left(\{n_k,y^\text{FP}_k\}_{k=1}^{K},A^\text{FP}|_{[-\tau,0]}\right)$ is exponentially stable inside its proper partially synchronized subspace iff the fixed point 
\[
\left(\{n'_k,y'_k\}_{k=1}^{K},A^\text{FP}|_{[y^\text{FP}_{K-1}-\tau,y^\text{FP}_{K-1}]}\right),
\]
where  
\[
n'_k=\left\{\begin{array}{ccl}
n_K&\text{if}&k=1\\
n_{k-1}&\text{if}&k\in\{2,\cdots,K\}
\end{array}\right.\quad\text{and}\quad
y'_k=\left\{\begin{array}{ccl}
R+\nu A(y^\text{FP}_{K-1}-\tau)&\text{if}&k=1\\
y^\text{FP}_{k-1}-y^\text{FP}_{K-1}&\text{if}&k\in\{2,\cdots,K\}
\end{array}\right.,
\]
is also exponentially stable in the same sense. This is a standard consequence of the continuity of the firing dynamics ({\sl ie.}\ the fact that the coordinates immediately after firing depend continuously on the coordinate immediately after the previous firing).

\subsection{Stability criterion for $\tau>0$}
The stability criterion for partially synchronized periodic orbits, which is given in the next statement, is reminiscent of the heuristic argument for the stability of the fully synchronized periodic orbit given at the beginning of section \ref{S-PSTABSYNC}.
\begin{Lem}
Assume that \eqref{CONDORDER} holds. Given $N\in\N$, $K\in\{1,\cdots ,N-1\}$, a cluster distribution $\{n_k\}_{k=1}^K$ and $\tau>0$, assume that the return map in the Poincar\'e section $y_K=0$ in the corresponding partially synchronized subspace has a fixed point $\left(\{n_k,y^\text{FP}_k\}_{k=1}^{K},A^\text{FP}|_{[-\tau,0]}\right)$. 
Assume also that the following conditions hold
\begin{itemize}
\item the fixed point is asymptotically stable inside its proper partially synchronized subspace,
\item the periodic orbit of \eqref{ODE1}-\eqref{ODE2} that passes through the fixed point has no firing in any of the time intervals $[-\tau,0]\ \text{mod}\ T_\text{FP}$, and the derivative of $A$ at $t=-\tau$ is negative, {\sl ie.}\ $\dot A^\text{FP}(-\tau)=m_\text{FP}(-\tau)-\beta A^\text{FP}(-\tau)<0$.  
\end{itemize}
Then, the fixed point is exponentially stable with respect to small perturbations that smear cluster $K$.
\label{STABCRIT}
\end{Lem}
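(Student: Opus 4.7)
The plan is to adapt the strategy used in the fully synchronized case (proof of Proposition \ref{MAINRES}(ii)), the reference dynamics now being the return dynamics inside the partially synchronized subspace associated with the distribution $\{n_k\}_{k=1}^{K}$. Specifically, I would split any small perturbation $\left(\{n'_k,y_k\}_{k=1}^{K'},A|_{[-\tau,0]}\right)$ into a \emph{longitudinal} part $\bigl((y_1,\dots,y_{K-1}),A|_{[-\tau,0]}\bigr)$, which parametrizes the $K$-cluster partially synchronized subspace (the fused cluster being $y_K=\dots=y_{K'}=0$), and a \emph{transverse} part $(y_K,\dots,y_{K'-1})$ encoding the smearing of cluster $K$. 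Denoting by $\widehat F$ the return map of the full perturbed system, I would then prove that each part contracts under one iteration of $\widehat F$, and conclude exponential stability by iterating in a suitable weighted norm.

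For the transverse part, I would use the expansion
\[
y'_k = y_k + \nu\bigl(\phi^{y_k-\tau}_m(A)-\phi^{-\tau}_m(A)\bigr),\qquad k\in\{K,\dots,K'-1\}.
\]
The assumption that the fixed-point orbit has no firings in $[-\tau,0]\ \mathrm{mod}\ T_\text{FP}$ guarantees, by continuity of the firing dynamics, that no firings of the perturbed orbit occur in $[-\tau,y_k-\tau]$ when the perturbation is sufficiently small. Consequently $s\mapsto \phi^s_m(A)$ is $C^1$ in a neighbourhood of $s=-\tau$, with derivative uniformly close to $\dot A^\text{FP}(-\tau)$. Substituting yields $y'_k = y_k\bigl(1+\nu\dot A^\text{FP}(-\tau)\bigr) + o(\epsilon)$, so that the transverse component contracts with rate $\gamma_1:=1+\nu\dot A^\text{FP}(-\tau)$. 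The hypothesis $\dot A^\text{FP}(-\tau)<0$ gives $\gamma_1<1$, while the bound $|\dot A|<1/\nu$ extracted from \eqref{CONDORDER} in the proof of Claim \ref{CLAIMORDER} ensures $\gamma_1>0$.

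For the longitudinal part, the key observation is that when the transverse component vanishes, the restriction of $\widehat F$ to the $K$-cluster partially synchronized subspace coincides with the return map $\widetilde F$ of that subspace. The first hypothesis of the lemma provides a rate $\gamma_2\in(0,1)$ at which $\widetilde F$ contracts the longitudinal norm near the fixed point. Invoking the Lipschitz-continuous dependence of the return map on its initial datum stated in Appendix \ref{A-LIPCONT}, I would then bound the longitudinal component of $\widehat F$ by that of $\widetilde F$ plus an error of order $\max_{k\in\{K,\dots,K'-1\}}y_k$, uniformly in small perturbations. Put together, one iteration of $\widehat F$ acts on the pair (longitudinal norm, transverse norm) as an upper-triangular linear map with diagonal entries close to $\gamma_2$ and $\gamma_1$ respectively and a finite off-diagonal coupling; choosing the weight $C_1$ appearing in the statement large enough makes $\widehat F$ a strict contraction in the resulting weighted max norm, and iteration produces the required decay $C_2\gamma^n\epsilon$.

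The step I expect to be the main obstacle is the longitudinal comparison $\widehat F\approx \widetilde F$ at the level of the full activator profile $A'|_{[t^1_\text{R}-\tau,t^1_\text{R}]}$ in sup norm, rather than at just one time. One must track both the displacement of the firing times of the smeared sub-clusters and the modification of their post-firing reset values, then propagate these perturbations through \eqref{Eq:EvforA} across a full period. This is precisely the estimate that the Lipschitz property of Appendix \ref{A-LIPCONT} is designed for, so the proof should eventually reduce to a careful invocation of that result, combined with the first-order expansions outlined above.
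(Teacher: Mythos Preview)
Your approach is essentially the one the paper takes: decompose into a longitudinal part (the $K$-cluster coordinates and the activator profile) and a transverse part (the smearing coordinates $y_K,\dots,y_{K'-1}$); contract the longitudinal part by comparing with the reference partially synchronized trajectory via the Lipschitz estimate of Appendix~\ref{A-LIPCONT} and then invoking the assumed stability inside the subspace; contract the transverse part via the first-order expansion $y'_k\simeq y_k\bigl(1+\nu\dot A^\text{FP}(-\tau)\bigr)$; and finally combine the two in a weighted max norm with $C_1$ chosen large.

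There is one point you gloss over that the paper treats explicitly. At the \emph{first} return, the reset values involve $A(y_k-\tau)$ for $y_k\in(0,\tau)$, and these are read directly from the given initial profile $A|_{[-\tau,0]}$, which is only assumed to be sup-norm close to $A^\text{FP}|_{[-\tau,0]}$. That profile is \emph{not} produced by the backward flow $\phi_m^{-\cdot}$ from $A(0)$, so your assertion that $s\mapsto\phi_m^s(A)$ is $C^1$ near $s=-\tau$ with derivative close to $\dot A^\text{FP}(-\tau)$ is not justified at this step; the transverse contraction $y'_k\simeq\gamma_1 y_k$ therefore need not hold for $n=1$. The paper deals with this by bounding the transverse part at the first return crudely via Lemma~\ref{LIPCONT}, namely $\max_{k}y_k(t^1_\text{R})\le L\epsilon$, and applies the derivative expansion only from the second return on, when $A|_{[t^1_\text{R}-\tau,t^1_\text{R}]}$ is a genuine segment of the trajectory and hence has derivative $m-\beta A$ close to $\dot A^\text{FP}(-\tau)$. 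The resulting one-step loss is absorbed into the prefactor $C_2$ of Definition~4.2(ii). With this adjustment your outline matches the paper's proof.
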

That the second condition in this statement depends on the value of the derivative $\dot A$ at a certain time suggests that stability with respect to cluster smearing {\sl a priori} depends on the cluster under consideration. However, we shall see in the proof of Theorem \ref{STABPARTSYNC} below that, provided that $\tau$ is sufficiently small, this is not the case because the sign of the derivative involved actually does not depend on the cluster under consideration.

Notice also that in the special case of full synchrony ($K=1)$, the statement is relevant to the unique synchronized periodic orbit of Section \ref{S-SYNCHRO}. Recall that this orbit exponentially attracts all synchronized trajectories. Moreover, the condition $\tau<R$ ensures that no firing can happen in the interval $[-\tau,0]$. In addition, since $m_\text{FP}(t)=-t$ for $t\in [-\tau,0]$, we have $\dot A^\text{FP}(-\tau)=\tau-\beta A^\text{FP}(-\tau)$. Since we showed that $A^\text{FP}(0)>0$, the continuity at 0 of the map $\tau\mapsto \tau-\beta A^\text{FP}(-\tau)$ implies that the conditions of Lemma \ref{STABCRIT} certainly hold when $\tau$ is small enough, {\sl viz.}\ statement {\sl (ii)} of Proposition \ref{MAINRES} is recovered as a particular instance where Lemma \ref{STABCRIT} applies.
\medskip

\noindent
{\sl Proof of Lemma \ref{STABCRIT}.} 
Given $K'\in \{K+1,\cdots ,N\}$, let $\left(\{n'_k,y_k\}_{k=1}^{K'},A|_{[-\tau,0]}\right)$ with $y_{K'}=0$ be such that 
\[
\max\left\{\|\mathbf y-\mathbf y^\text{FP}\|_{K-1},C_1\max_{k\in \{K,\cdots ,K'-1\}}y_k,\|A|_{[-\tau,0]}-A^\text{FP}_{[-\tau,0]}\|_0\right\}\leq \epsilon
\]
where $\epsilon\in (0,\epsilon_0)$ is arbitrary and $\epsilon_0,C_1>0$ are to be determined. We are going to evaluate the amplitude of the perturbation at the instants $t^n_\text{R}$  by considering separately the coordinates $y_k(t^n_R)$ for $k\in\{1,\cdots K-1\}$ to those for $k\in \{K,\cdots ,K'-1\}$.
\medskip

\noindent
{\sl Analysis of the coordinates $\{y_k(t^n_\text{R})\}_{k=1}^{K-1}$ and $A|_{[t^n_\text{R}-\tau,t^n_\text{R}]}$.} For these coordinates, we use the fact that the fixed point is asymptotically stable inside its proper partially synchronized subspace. To do so, we rely on a property of Lipschitz continuity of the return map, which is claimed and proved in Appendix \ref{A-LIPCONT}.

Let $t\mapsto \left(\{n_k,y^\text{p.sync}_k(t)\}_{k=1}^K,A^\text{p.sync}(t)\right)$ be the partially synchronized solution issued from the initial datum $\left(\{n_k,(1-\delta_{k,K})y_k\}_{k=1}^K,A|_{[-\tau,0]}\right)$.\footnote{$\delta_{i,j}$ is the Kronecker symbol.} Notice that the value of $A(-\tau)$ in this trajectory is the same as the value of $A(-\tau)$ in the original trajectory $t\mapsto \left(\{n'_k,y_k(t)\}_{k=1}^{K'},A(t)\right)$. Hence, the first return time to $y_K=0$ for this trajectory is the same as the first return time $t^1_\text{R}$ of the original trajectory. 

Consider the decomposition
\[
\|\mathbf y(t^1_\text{R})-\mathbf y^\text{FP}\|_{K-1}\leq \|\mathbf y(t^1_\text{R})-\mathbf y^\text{p.sync}(t^1_\text{R})\|_{K-1}+\|\mathbf y^\text{p.sync}(t^1_\text{R})-\mathbf y^\text{FP}\|_{K-1}.
\]
By periodicity of the trajectory passing through $\left(\{n_k,y^\text{FP}_k\}_{k=1}^{K},A^\text{FP}|_{[-\tau,0]}\right)$, we must have 
\[
y_1^\text{FP}=\max_{k\in\{1,\cdots ,K-1\}}y^\text{FP}_k<T_\text{FP}=R+\nu A^\text{FP}(-\tau).
\]
Hence, provided that $\epsilon$ is sufficiently small, we have 
\[
y_1=\max_{k\in\{1,\cdots ,K'-1\}}y_k<t^1_\text{R}=R+\nu A(-\tau)
\]
Applying Lemma \ref{LIPCONT} in Appendix \ref{A-LIPCONT}, we obtain the following inequality, when regarding $\mathbf y^\text{p.sync}$ and $\mathbf y^\text{p.sync}(t^1_\text{R})$ as elements of the partially synchronized subspace defined by $\{n'_k\}_{k=1}^{K'}$ whose coordinates $k\in \{K,\cdots ,K'\}$ all vanish
\[
\|\mathbf y(t^1_\text{R})-\mathbf y^\text{p.sync}(t^1_\text{R})\|_{K-1}\leq \|\mathbf y(t^1_\text{R})-\mathbf y^\text{p.sync}(t^1_\text{R})\|_{K'-1}\leq L\|\mathbf y-\mathbf y^\text{p.sync}\|_{K'-1}=L\max_{k\in\{K,\cdots ,K'-1\}}y_k\leq L\frac{\epsilon}{C_1}.
\]
On the other hand, asymptotic stability inside the partially synchronized subspace implies the existence of $\gamma_1\in (0,1)$ such that, provided that $\epsilon$ is sufficiently small, we have 
\[
\|\mathbf y^\text{p.sync}(t^1_\text{R})-\mathbf y^\text{FP}\|_{K-1}\leq \gamma_1\epsilon,
\]
and then
\[
\|\mathbf y(t^1_\text{R})-\mathbf y^\text{FP}\|_{K-1}\leq \left(\gamma_1+\frac{L}{C_1}\right)\epsilon.
\]
A similar reasoning applies to $\|A|_{[t^1_\text{R}-\tau,t^1_\text{R}]}-A^\text{FP}_{[-\tau,0]}\|_0$, which yields the same inequality. As a consequence, provided that $C_1$ is large enough so that $\gamma_2:=\gamma_1+\frac{L}{C_1}<1$, we get 
\[
\max\left\{\|\mathbf y(t^1_\text{R})-\mathbf y^\text{FP}\|_{K-1},\|A|_{[t^1_\text{R}-\tau,t^1_\text{R}]}-A^\text{FP}_{[-\tau,0]}\|_0\right\}\leq \gamma_2 \epsilon.
\]
By induction, the same arguments show that for every sufficiently small $\epsilon\in\R^+$, we have 
\[
\max\left\{\|\mathbf y(t^n_\text{R})-\mathbf y^\text{FP}\|_{K-1},\|A|_{[t^n_\text{R}-\tau,t^n_\text{R}]}-A^\text{FP}_{[-\tau,0]}\|_0\right\}\leq \gamma_2^n \epsilon,\ \forall n\in\N
\]
provided that one can simultaneously ensure 
\[
\max_{k\in\{K,\cdots ,K'-1\}}y_k(t^n_\text{R})\leq \frac{\gamma_2^n\epsilon}{C_1}.
\]
\medskip

\noindent
{\sl Analysis of the coordinates $\{y_k(t^n_\text{R})\}_{k=K}^{K'-1}$.} For these coordinates, we separate the cases $n=1$ and $n>1$. For $n=1$, we simply refer to Lemma \ref{LIPCONT} to obtain 
\[
\max_{k\in \{K,\cdots ,K'-1\}}y_k(t^1_\text{R})\leq L\epsilon,
\]
and the desired inequality holds for any pair $\gamma,C_2$ such that $L\leq \frac{C_2 \gamma}{C_1}$. For $n>1$, we proceed by induction. From the computation of the return map in Section \ref{S-RETMAP}, we have in particular at the instant $t^2_\text{R}$ of the third firing of the cluster $K'$
\[
y_k(t^2_\text{R})=y_k(t^1_\text{R})+\nu \left(A(y_k(t^1_\text{R})+t^1_\text{R}-\tau)-A(t^1_\text{R}-\tau)\right),\ \forall k\in \{K,\cdots ,K'-1\}.
\]
The assumption that the periodic orbit does not fire in the time interval $[T_\text{FP}-\tau,T_\text{FP}]$ is equivalent to assuming that $T_\text{FP}-y_1^\text{FP}>\tau$ (NB: recall that $y_1^\text{FP}=\max_k y_k^\text{FP}$). Using that $t^1_\text{R}-T_\text{FP}=\nu (A(-\tau)-A^\text{PF}(-\tau))$, the assumption on the initial datum $\left(\{n'_k,y_k\}_{k=1}^{K'},A|_{[-\tau,0]}\right)$ implies that we also have $t^1_\text{R}-y_1>\tau$ (and $y_1=\max_k y_k$) when $\epsilon$ is sufficiently small, {\sl viz.}\ the subsequent trajectory does not fire in the time interval $[t^1_\text{R}-\tau,t^1_\text{R}]$. Therefore, for $\epsilon$ sufficiently small, we have
\begin{align*}
y_k(t^2_\text{R})&=y_k(t^1_\text{R})\left(1+\nu \dot A(t^1_\text{R}-\tau)\right)+{\cal O}\left((y_k(t^1_\text{R}))^2\right)\\
&=y_k(t^1_\text{R})\left(1+\nu (m(t^1_\text{R}-\tau)-\beta A(t^1_\text{R}-\tau))\right)+{\cal O}\left((y_k(t^1_\text{R}))^2\right),\ \forall k\in \{K,\cdots ,K'-1\}
\end{align*}
The analysis above of $\|A|_{[t^1_\text{R}-\tau,t^1_\text{R}]}-A^\text{FP}_{[-\tau,0]}\|_0$ showed that $|A(t^1_\text{R}-\tau)-A^\text{FP}(-\tau)|$ can be made arbitrarily small by taking $\epsilon$ sufficiently small. Moreover, the fact that no firing takes place in the time interval $[t^1_\text{R}-\tau,t^1_\text{R}]$ implies that
\[
m(t^1_\text{R}-\tau)=m(t^1_\text{R})+\tau.
\]
Similarly, we have $m_\text{FP}(-\tau)=m_\text{FP}(0)+\tau$ for the periodic trajectory. In addition, the arguments above showed that $\|\mathbf y(t^1_\text{R})-\mathbf y^\text{FP}\|_{K'-1}$ can be made arbitrarily small by taking $\epsilon$ sufficiently small.  Consequently, the same property holds for $|m(t^1_\text{R}-\tau)-m_\text{FP}(-\tau)|$. Altogether, using the assumption $m_\text{FP}(-\tau)-\beta A^\text{FP}(-\tau)<0$, this proves the existence of $\gamma_3\in (0,1)$ such that, provided that $\epsilon$ is sufficiently small, we have
\[
y_k(t^2_\text{R})\leq \gamma_3 y_k(t^1_\text{R}),\ \forall k\in \{K,\cdots ,K'-1\},
\]
from where the desired induction follows with $\gamma=\max\{\gamma_2,\gamma_3\}$ when the results of the analysis of the other coordinates are also taken into account. The proof of the Lemma is complete. \hfill $\Box$

\subsection{Instability criterion for $\tau =0$}
The announced instability criterion for partially synchronized periodic orbits is given in the next statement. 
\begin{Lem}
Assume that \eqref{CONDORDER} holds. Given $N\in\N$, $K\in\{1,\cdots ,N-1\}$ and a cluster distribution $\{n_k\}_{k=1}^K$, assume that, for $\tau=0$, the return map in the Poincar\'e section $y_K=0$ in the corresponding partially synchronized subspace has a fixed point $\left(\{n_k,y^\text{FP}_k\}_{k=1}^{K},A^\text{FP}\right)$. Let $\dot A^\text{FP}(0^+)$ be the right limit at 0 of the derivative of the activator concentration in the corresponding trajectory. Then, under the condition
\[
 \dot A^\text{FP}(0^+)>\frac{R+\nu A^\text{FP}}{N}
\]
the fixed point is unstable with respect to some arbitrarily small perturbations of cluster $K$.
\label{INSTABCRIT}
\end{Lem}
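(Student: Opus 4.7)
The plan is to mirror the proof of Proposition \ref{MAINRES}(i), extended to the partially synchronized setting: I will exhibit an arbitrarily small perturbation that smears cluster $K$ and whose return iterate expands at leading order. Concretely, starting from the fixed point, I split cluster $K$ into two sub-clusters: $n_K - 1$ oscillators kept at $y = 0$ (anchoring the new Poincaré section) and one oscillator lifted to $y = \epsilon > 0$, with the remaining clusters and $A^0 = A^\text{FP}$ (recall $\tau = 0$) left unchanged. Order preservation ensured by \eqref{CONDORDER}, together with $0 < \epsilon < y_{K-1}^\text{FP}$ when $K \geq 2$, pins down the firing schedule over one period: the sub-cluster of size $n_K-1$ fires at $t = 0$, the pulled-out oscillator fires at $t = \epsilon$, and the remaining clusters fire at their nominal FP instants up to $O(\epsilon)$ corrections.

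Next I would perform the key linearized computation. Between $t = 0^+$ and $t = \epsilon$ only $n_K-1$ oscillators have been reset, so the mean satisfies
\[
m(0^+) = m^\text{FP}(0^+) - \frac{R + \nu A^\text{FP}}{N} + O(\epsilon),
\]
the missing $(R + \nu A^\text{FP})/N$ being precisely the contribution of the pulled-out oscillator, which has not yet fired. Subtracting $\beta A^0$ then gives $\dot A(0^+) = \dot A^\text{FP}(0^+) - (R + \nu A^\text{FP})/N + O(\epsilon)$. The pulled-out oscillator fires at $t = \epsilon$ to the reset value $R + \nu A(\epsilon)$, and the next time the $(n_K-1)$-sub-cluster reaches $0$ is $t_\text{R} = R + \nu A^0$, so at $t_\text{R}$ the pulled-out oscillator sits at
\[
y'_K = (R + \nu A(\epsilon)) - (t_\text{R} - \epsilon) = \epsilon + \nu(A(\epsilon) - A^0) = \left(1 + \nu\left[\dot A^\text{FP}(0^+) - \frac{R + \nu A^\text{FP}}{N}\right]\right)\epsilon + O(\epsilon^2),
\]
where I used $A(\epsilon) = A^0 + \dot A(0^+)\epsilon + O(\epsilon^2)$. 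Under the hypothesis of the lemma, the bracketed quantity is strictly positive, so the leading coefficient strictly exceeds $1$.

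From here the argument would close exactly as in the proof of Proposition \ref{MAINRES}(i): suppose, for contradiction, that some small neighborhood of the fixed point were forward invariant under the return map; then $A^0$ and the other cluster coordinates would remain close to their FP values along all iterates, and by continuity the leading coefficient above would stay bounded below by some $1 + c > 1$ uniformly, forcing $y_K^{(n)} \geq (1 + c/2)^n \epsilon$ for all $n$ while the iterates remain in the neighborhood, eventually violating the assumed smallness. As a sanity check, specializing to $K = 1$ recovers Proposition \ref{MAINRES}(i), since $\dot A^\text{FP}(0^+) = R + \nu A^\text{FP} - \beta A^\text{FP}$ in that case and the present criterion reduces to $(N-1)(R + \nu A^\text{FP})/N > \beta A^\text{FP}$, i.e.\ to the inequality verified in that proof via $A^\text{FP} < R/(2\beta - \nu)$. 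The main technical subtlety I anticipate is controlling the back-reaction of the pulled-out perturbation onto $A$ and onto the positions of the other clusters across iterations; this is the same delicate continuity issue encountered elsewhere in the paper and should be rigorously handled either by establishing a forward-invariant cone aligned with the pulled-out direction, or by directly inspecting the block structure of the linearized return map.
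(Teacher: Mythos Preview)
Your proposal is correct and follows essentially the same route as the paper: split cluster $K$ by pulling out a single oscillator to height $\epsilon$, observe that the missing reset reduces $m(0^+)$ by exactly $(R+\nu A^{\text{FP}})/N$, and read off the expansion factor $1+\nu\bigl[\dot A^{\text{FP}}(0^+)-(R+\nu A^{\text{FP}})/N\bigr]$ from the first-order increment $y'_K=\epsilon+\nu(A(\epsilon)-A(0))$. The paper's proof is slightly terser at the end (it simply notes that the expansion factor can be made close to the positive quantity in the hypothesis and says ``the Lemma easily follows''), whereas your contradiction argument via a hypothetical forward-invariant neighborhood makes the instability conclusion more explicit; both are fine.
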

In the case of the fully synchronized periodic orbit ($K=1$), we have $\dot A^\text{FP}(0^+)=R+\nu A^\text{FP}-\beta A^\text{FP}$; hence the condition in the Proposition becomes
\[
\frac{N-1}N(R+\nu A^\text{FP})-\beta A^\text{FP}>0,
\]
which is exactly the identified and proved criterion in the proof of statement {\em (i)} in Proposition \ref{MAINRES}.
\medskip

\noindent
{\sl Proof of Lemma \ref{INSTABCRIT}.} Given $K'\in \{K+1,\cdots ,N\}$, let $\left(\{n'_k,y_k\}_{k=1}^{K'},A(0)\right)$ with $y_{K'}=0$ be an initial condition such that
\[
y_k=\left\{\begin{array}{ccl}
y_k^\text{FP}&\text{if}&k\in\{1,\cdots, K-1\}\\
y_K&\text{if}&k=K\\
0&\text{if}&k\in\{ K+1,\cdots ,K'\}
\end{array}\right.
\]
where $y_K>0$ is sufficiently small. There is no firing in the time interval $(0,y_K)$. Hence, we have
\[
y_K(t_\text{R})=y_K+\nu(A(y_K)-A(0))=y_K(1+\nu \dot A(0^+))+{\cal O}(y_K^2)
\]
Moreover, equation \eqref{ODE2} implies that $\dot A(0^+)=m(0^+)-\beta A(0)$. Using
\[
y_k(0^+)=\left\{\begin{array}{ccl}
y_k^\text{FP}&\text{if}&k\in\{1,\cdots, K-1\}\\
y_K&\text{if}&k=K\\
R+\nu A(0)&\text{if}&k\in\{ K+1,\cdots ,K'\}
\end{array}\right.\quad \text{and}\quad 
y_k^\text{FP}(0^+)=\left\{\begin{array}{ccl}
y_k^\text{FP}&\text{if}&k\in\{1,\cdots, K-1\}\\
R+\nu A^\text{FP}&\text{if}&k=K
\end{array}\right.
\]
we get
\[
m(0^+)=m^\text{FP}(0^+)-\frac{R+\nu A(0)}{N}+y_K+\frac{n_K\nu (A(0)-A^\text{FP})}{N}.
\]
It follows that $m(0^+)-\beta A(0)$ can be made arbitrarily close to $\dot A^\text{FP}(0^+)-\frac{R+\nu A^\text{FP}}{N}$ when $y_K$ and $|A(0)-A^\text{FP}|$ are sufficiently small. The Lemma easily follows. 
\hfill $\Box$

\subsection{Delay-dependent stability in arbitrary large populations}
Let $K\in\N$ and $\{n_k\}_{k=1}^K\in\N^K$ be given. The equations \eqref{ODE1}-\eqref{ODE2} of the dynamics imply that if $t\mapsto \left(\{n_k,y_k(t)\}_{k=1}^{K},A(t)\right)$ is a partially synchronized trajectory in the population of $\sum_{k=1}^Kn_k$ oscillators, then for every $q\in\N$, the function $t\mapsto \left(\{q n_k,y_k(t)\}_{k=1}^{K},A(t)\right)$ is a partially synchronized trajectory in the population of $q\sum_{k=1}^Kn_k$ oscillators, {\sl viz.}\ the existence and the coordinates of partially synchronized trajectories do not depend on the (common) scaling of its cluster sizes.

The results on the stability analysis of partially synchronized periodic orbits are summarized in the following statement, whose conclusions can be regarded as some extension of Proposition \ref{MAINRES} to  fixed points of the return map asssociated with an arbitrary cluster distribution (with given relative cluster sizes).  
\begin{Thm}
Assume that \eqref{CONDORDER} holds. Given $K\in\N$ and $\{n_k\}_{k=1}^K\in\N^K$, assume that for $\tau=0$, the return map  
in the partially synchronized subspace associated with $\{n_k\}_{k=1}^K$, of the dynamics of a population of $\sum_{k=1}^Kn_k$ DF oscillators, has an exponentially stable fixed point, say $\left(\{n_k,y^\text{FP}_k\}_{k=1}^{K},A^\text{FP}\right)$. Then, the following assertions hold.

\noindent
(i) There exists $q_0\in\N$ such that for $\tau=0$ and every $q>q_0$, the fixed point $\left(\{q n_k,y^\text{FP}_k\}_{k=1}^{K},A^\text{FP}\right)$ is unstable with respect to some arbitrarily small perturbations of any of its clusters.

\noindent
(ii) There exists $\tau_0\in \R_\ast^+$ such that for every $\tau\in (0,\tau_0)$ and $q\in \N$, the return map of the partially synchronized dynamics with delay $\tau$ has a fixed point which is the continuation of $\left(\{q n_k,y^\text{FP}_k\}_{k=1}^{K},A^\text{FP}\right)$. This continued fixed point is exponentially stable with respect to small perturbations that smear any of its clusters.
\label{STABPARTSYNC}
\end{Thm}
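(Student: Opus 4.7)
My plan is to apply the criteria of Lemmas \ref{INSTABCRIT} and \ref{STABCRIT} cluster by cluster, exploiting the scaling invariance stated just before the theorem together with the phase-shift invariance of exponential stability in the proper partially synchronized subspace noted earlier. Thanks to the phase-shift invariance, one may use the Poincar\'e section $\{y_k=0\}$ to access the firing of any cluster $k\in\{1,\dots,K\}$, so the two criteria need to be checked separately at every cluster firing of the periodic orbit. Thanks to the scaling invariance, $\dot A^\text{FP}$ along the periodic orbit is $q$-independent: the jump of $m^\text{FP}$ at the firing of cluster $k$ is $\frac{qn_k(R+\nu A^\text{FP})}{q\sum_j n_j}=\frac{n_k(R+\nu A^\text{FP})}{\sum_j n_j}$, independently of $q$, and likewise for all other $q$-dependent quantities entering the dynamics of $A$.

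For item \emph{(i)}, Lemma \ref{INSTABCRIT} applied at the firing of cluster $k$ yields the desired instability provided
\[
\dot A^\text{FP}(0^+)>\frac{R+\nu A^\text{FP}}{q\sum_j n_j}
\]
holds there. Since the right-hand side tends to 0 as $q\to +\infty$, this inequality holds simultaneously at all $K$ cluster firings for every $q$ larger than a common threshold $q_0$, provided the strict sign condition $\dot A^\text{FP}(0^+)>0$ holds at every cluster firing of the periodic orbit.

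For item \emph{(ii)}, I would first establish the persistence and the exponential stability of the continuation of the fixed point inside its proper partially synchronized subspace for small $\tau$. These are standard consequences of the implicit function theorem applied to the $K$-dimensional return map $F_K$, whose coefficients depend smoothly on $\tau$ in a neighbourhood of 0 (at $\tau=0$ the firings of the periodic orbit are isolated and this isolation persists for small $\tau$), combined with the continuity of the spectrum of its derivative. The no-firing hypothesis of Lemma \ref{STABCRIT} over the intervals $[-\tau,0]\ \text{mod}\ T_\text{FP}$ follows from the same isolation of firings. The remaining hypothesis $\dot A^\text{FP}(-\tau)<0$ at each cluster firing reduces, by continuity in $\tau$, to the sign condition $\dot A^\text{FP}(0^-)<0$ at every cluster firing.

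The main obstacle is thus to establish the two sign conditions $\dot A^\text{FP}(0^+)>0$ and $\dot A^\text{FP}(0^-)<0$ at every cluster firing of the orbit. My approach relies on three geometric facts about $\dot A^\text{FP}=m^\text{FP}-\beta A^\text{FP}$: between consecutive firings, the relation $\ddot A^\text{FP}=-1-\beta\dot A^\text{FP}$ implies that $\dot A^\text{FP}$ is strictly monotonic on every inter-firing interval; at each firing of cluster $k$, $\dot A^\text{FP}$ jumps upward by the positive quantity $\frac{n_k(R+\nu A^\text{FP})}{\sum_j n_j}$; and periodicity of $A^\text{FP}$ forces $\int_0^{T_\text{FP}}\dot A^\text{FP}(t)\,dt=0$. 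Combining these facts with the exponential stability hypothesis of the fixed point in its proper subspace, which should preclude the degenerate configurations in which $\dot A^\text{FP}$ would keep a constant sign over a whole inter-firing interval, one should obtain that $\dot A^\text{FP}$ decreases from a positive value at $0^+$ to a negative value at $0^-$ on each inter-firing interval. Once these sign conditions are secured, item \emph{(i)} follows by choosing $q_0$ large enough and item \emph{(ii)} by invoking Lemma \ref{STABCRIT} at each cluster.
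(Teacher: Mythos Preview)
Your overall strategy matches the paper's: reduce everything to verifying the hypotheses of Lemmas \ref{INSTABCRIT} and \ref{STABCRIT} at every cluster firing, use the Implicit Function Theorem for the continuation in $\tau$, and exploit the $q$-independence of the orbit. The gap is in how you obtain the two sign conditions $\dot A^\text{FP}(0^+)>0$ and $\dot A^\text{FP}(0^-)<0$ at \emph{every} firing.

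Your three geometric facts (monotonicity of $\dot A^\text{FP}$ between firings, upward jumps at firings, zero mean over a period) do \emph{not} by themselves force $\dot A^\text{FP}$ to change sign on each inter-firing interval; they are compatible, for instance, with $\dot A^\text{FP}$ remaining positive across several short intervals and then being strongly negative on a long one. You patch this by asserting that the exponential stability hypothesis ``should preclude the degenerate configurations'', but you give no mechanism, and in fact the paper's argument does not use stability here at all. The missing observation is purely algebraic: for $\tau=0$ the return-map formula \eqref{Eq:ReturnMap} reads $y_k'=y_k+\nu\bigl(A(y_k)-A(0)\bigr)$, so the fixed-point equation $y_k^\text{FP}=y_k^\text{FP}+\nu\bigl(A^\text{FP}(y_k^\text{FP})-A^\text{FP}\bigr)$ forces $A^\text{FP}(y_k^\text{FP})=A^\text{FP}$ for every $k$. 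Hence $A^\text{FP}$ returns to the \emph{same} value at every firing, so on each inter-firing interval it must have an interior critical point, and your monotonicity of $\dot A^\text{FP}$ then yields $\dot A^\text{FP}>0$ at the start and $\dot A^\text{FP}<0$ at the end of every such interval. Once you insert this observation, the rest of your outline goes through exactly as in the paper; without it, the sign conditions remain unproved.
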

As an application, we provide in the next section, an example of periodic orbits that fit the condition of this statement for $\beta$ sufficiently large, namely the periodic orbits with equi-distributed repressor concentrations. Evidently, the fully synchronized fixed point also satisfies the assumption of Theorem \ref{STABPARTSYNC}, as already claimed at the begining of Proposition \ref{MAINRES}.\footnote{Notice that item {\sl (i)} in Theorem \ref{STABPARTSYNC} is slightly weaker than the corresponding item in Proposition \ref{MAINRES} (which claims that $q_0=1$ in this case).} 
\medskip

\noindent
{\sl Proof of Theorem \ref{STABPARTSYNC}.} The proof decomposes into two parts. The first part establishes the existence of the continued fixed point for $\tau>0$ sufficiently small. The second part shows that such return map fixed points must appropriately satisfy the conditions of Lemmas \ref{STABCRIT} and \ref{INSTABCRIT}.  

For the first part, one preliminary shows that continuity arguments ensure that the conditions of Claim \ref{CLAIMFINITEDIM} hold for $\tau$ sufficiently small and in a sufficiently small neighbourhood of $\left(\{y^\text{FP}_k\}_{k=1}^{K},A^\text{FP}\right)$ in $(\R^+)^K$; hence the return dynamics in that neighbourhood reduces to a mapping of $(\R^+)^K$ into itself. Moreover, this map can be obtained as the composition of the maps, each called a {\bf firing map}, that bring the system from the state immediately before the firing of the $(k+1)$th cluster to the one of the $k$th cluster. The firing maps write, given a datum $\left(\{y_k\}_{k=1}^{K},A\right)$ for which $y_{k+1}=0$ and $y_k=\min_{\ell\neq k+1} y_\ell$
\[
y'_\ell=\left\{\begin{array}{ccl}
y_\ell-y_k&\text{if}&\ell\neq k+1\\
R+\nu \phi_m^{-\tau}(A)-y_k&\text{if}&\ell=k
\end{array}\right.\quad\text{and}\quad A'=\phi_m^{y_k}(A),
\]
where the expressions \eqref{Eq:EvforA} and \eqref{BCKVC} are to be used with respectively
\[
m(t)=\frac1{N}\left(\sum_{k=1}^Kn_ky_k+R+\nu \phi_m^{-\tau}(A)\right)-t,\ \forall t\in (0,y_k)\quad \text{and}\quad m(t)=\frac1{N}\sum_{k=1}^Kn_ky_k-t\ \forall t\in [-\tau,0].
\]
Accordingly, each firing map is differentiable in $\R^K$ and, together with its derivative, it continuously depends on $\tau$. Hence, so does the composed return map. The assumption that $\left(\{n_k,y^\text{FP}_k\}_{k=1}^{K},A^\text{FP}\right)$ is an exponentially stable fixed point implies, using the Implicit Function Theorem, that it can be uniquely continued as an exponentially stable fixed point of the return map for $\tau\in \R_\ast^+$ sufficiently small. The first part of the proof is complete.

The main argument of the second part consists in establishing the following signs for the left and right limits of the derivative of $A_\tau^\text{FP}$ (where the explicit dependence on $\tau$ has been added for clarity) at each firing
\[
\dot A_\tau^\text{FP}(\left(y_{\tau,k}^\text{FP}\right)^-)<0\quad\text{and}\quad \dot A_\tau^\text{FP}(\left(y_{\tau,k}^\text{FP}\right)^+)>0\ \forall k\in\{1,\cdots ,K\}.
\]
In order to prove these signs, notice that equation \eqref{ODE2} and the fact that $\dot m(t)=-1$ between two firings imply that if it happens that $\dot A(t)=0$ for some $t$ between two firings, then this can happen only once and $\dot A(t)>0$ for all $t$ before (resp.\ $\dot A(t)<0$ after) that instant. Moreover, equation \eqref{Eq:ReturnMap}, or the expression of $x_i(t_\text{R})$ in Section \ref{S-RETMAP} implies that, for a fixed point $\left(\{n_k,y^\text{FP}_{0,k}\}_{k=1}^{K},A_0^\text{FP}\right)$ of the return map for $\tau=0$, the activator concentration must be the same at each firing, {\sl ie.}\ 
\[
A_0^\text{FP}(y_{0,k}^\text{FP})=A_0^\text{FP},\ \forall  k\in\{1,\cdots ,K-1\}.
\]
Therefore, we must have $\dot A_0^\text{FP}(t)=0$ for some $t$ between any two firings, and then the desired signs hold for $\tau=0$. By continuous dependence on $\tau$ of the coordinates of the continued fixed point, the same signs hold for $A_\tau^\text{FP}$ provided that $\tau$ is small enough. 

Together with $\dot A_\tau^\text{FP}(\left(y_{\tau,k}^\text{FP}\right)^-)<0$, the implicit assumption in the continuation argument that there is no firing in any of the intervals $[y_{\tau,k}^\text{FP}-\tau,y_{\tau,k}^\text{FP}]$, and the fact that the fixed point coordinates do not depend on $q$ imply that the assumptions of Lemma \ref{STABCRIT} hold for the fixed point $\left(\{qn_k,y^\text{FP}_{0,k}\}_{k=1}^{K},A_0^\text{FP}\right)$, for every $q\in\N$. Moreover, the same assumptions hold for the return map fixed point of the corresponding periodic orbit in every Poincar\'e section $y_k=0$, implying stability with respect to smearing of any cluster.

Together with $\dot A_\tau^\text{FP}(\left(y_{\tau,k}^\text{FP}\right)^+)>0$ and the fact that the fixed point coordinates do not depend on $q$, we certainly have 
\[
\dot A_\tau^\text{FP}(\left(y_{\tau,k}^\text{FP}\right)^+)>\frac{R+\nu A^\text{FP}}{q\sum_{k=1}^Kn_k},
\]
when $q$ is sufficiently large. In this case, the assumptions of Lemma \ref{INSTABCRIT} hold, as they do for each corresponding return map fixed point in every Poincar\'e section $y_k=0$. The proof of Theorem is complete.
\hfill $\Box$

\subsection{Application to periodic orbits with equi-distributed repressor concentrations}\label{S-EQUIDIST}
As a example of application of Theorem \ref{STABPARTSYNC}, we consider in this section, return map fixed points for which the repressor coordinates are {\bf equi-distributed} among $N$ clusters, more precisely, those elements $(\mathbf x^\text{FP},A^\text{FP})\in \R^{N+1}$ such that $x^\text{FP}_N=0$, the function $m(s)=m^\text{FP}+s$ for $s\in [-\tau,0]$ and \footnote{By letting $x^\text{FP}_0=R+\nu \phi_m^{-\tau}(A^\text{FP})$, one could include the synchronized fixed point in this family, that would be obtained for $N=1$.} 
\[
x^\text{FP}_i-x^\text{FP}_{i+1}=\frac{R+\nu \phi_m^{-\tau}(A^\text{FP})}{N},\ \forall i\in\{1,\cdots ,N-1\}\quad\text{and}\quad A^\text{FP}=\phi_m^{x^\text{FP}_{N-1}}(A^\text{FP}).
\]
It is immediate to verifiy that the coordinates of $(\mathbf x^\text{FP},A^\text{FP})$ are recovered after every firing in the trajectory, not only under the action of the return map. Moreover, the repressor coordinates are entirely determined by $A^\text{FP}$. For $\tau=0$, an equation for $A^\text{FP}$  can be obtained, which can be shown to have exactly one solution, {\sl viz.}\ for each $N>1$, there exists a unique $N$-cluster equi-distributed fixed point. Moreover, a systematic stability analysis can be achieved for $\tau=0$, which is rigorous for $N\in \{2,\cdots ,5\}$ and concludes that the fixed point is stable in its partially synchrony subspace, provided that $\beta$ is sufficiently large. All these results are presented in the next statement.
\begin{Lem}
(i) For every $N\in\N$, $N>1$ and $\tau=0$, there exists a unique fixed point with equi-distributed repressor coordinates.

\noindent
(ii) Given $R,\nu$ and $N\in\{2,\cdots, 5\}$, there exists $\beta_N>0$ such that for every $\beta>\beta_N$, the equi-distributed fixed point is exponentially stable in its proper partially synchronized subspace.
\label{TWOCLUST}
\end{Lem}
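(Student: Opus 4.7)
I would parametrize equi-distributed fixed points by the inter-firing interval $\Delta=(R+\nu A^\text{FP})/N$. The equi-distribution condition together with $x_N^\text{FP}=0$ then forces $x_i^\text{FP}=(N-i)\Delta$, which in turn determines the sawtooth mean concentration along the periodic orbit: a direct count gives $m(t)=\tfrac{(N+1)\Delta}{2}-t$ on $(0,\Delta]$. Inserting this expression into \eqref{Eq:EvforA} and imposing periodicity $A(\Delta)=A(0)=A^\text{FP}$ yields, after elementary integration, an explicit closed form $A^\text{FP}=H_N(\Delta)$ in terms of $\beta\Delta$ and $N$. Combined with the consistency relation $\nu A^\text{FP}=N\Delta-R$, the existence and uniqueness of the fixed point reduce to a scalar equation $\nu H_N(\Delta)=N\Delta-R$. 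A short calculation shows $\nu H_N'(\Delta)<N$ (using $\nu<\beta$ and the uniform bound $|f'(u)|\le 1/2$ on the derivative of $f(u)=u/(e^u-1)$), so the difference of the two sides is strictly decreasing; combined with a sign check at $\Delta=R/N$ and as $\Delta\to+\infty$ (where the linear RHS eventually dominates the slower-growing LHS thanks to $\nu<\beta$), this yields a unique crossing, hence a unique fixed point.

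\textbf{Plan for (ii).} For $\tau=0$ the return map is a smooth self-map of a neighbourhood of the fixed point in the $N$-dimensional section $\{y_N=0\}$ with coordinates $(y_1,\dots,y_{N-1},A)$ (within the proper partially synchronized subspace). Exponential stability is equivalent to the spectral radius of the Jacobian $DF$ at the fixed point being strictly less than one. I would compute $DF$ by decomposing $F$ as the composition of the $N$ firing maps introduced in the proof of Theorem \ref{STABPARTSYNC}: each firing map advances time by the smallest coordinate and resets the corresponding cluster to $R+\nu A$, and its derivative at the fixed point can be written in closed form by differentiating \eqref{Eq:EvforA} in both $A$ and in the terminal time. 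The resulting $N\times N$ matrix has entries that are rational functions of $e^{\pm\beta\Delta}$, $\nu$ and $\beta$.

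\textbf{Spectral analysis and main obstacle.} To conclude, I would apply the Schur--Cohn (Jury) criterion to the characteristic polynomial of $DF$ and show that for $\beta$ large enough all its roots lie strictly inside the unit disk. The relevant regime is $\beta\to+\infty$ with $R,\nu$ fixed: here $A^\text{FP}=O(1/\beta)$, the coupling term $\nu A$ is $O(1/\beta)$, and $DF$ acquires a block structure in which the $(N-1)\times(N-1)$ block governing the $y_i$-perturbations is $I+O(1/\beta)$ while the $A$-direction is strongly contracting (rate $e^{-\beta\Delta}$). A careful expansion of the subleading $O(1/\beta)$ terms, which come exclusively from the activator coupling, should show that they form a negative-definite perturbation of the identity block, pushing all of the near-unity eigenvalues strictly inside the unit disk. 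The main obstacle is purely computational: the Jury inequalities reduce to a collection of polynomial inequalities in the entries of $DF$ whose number and degree grow rapidly with $N$, and it is precisely this growth that forces the restriction to $N\in\{2,3,4,5\}$, where the inequalities can be verified either by hand or with the aid of a computer algebra system.
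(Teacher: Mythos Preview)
Your approach to (i) is correct and runs parallel to the paper's, just reparametrized: you solve for $A^\text{FP}$ in terms of $\Delta$ and then close the loop via $\nu A^\text{FP}=N\Delta-R$, whereas the paper writes a single function $f(A)$ and locates its zero. Your monotonicity argument via the bound $|f'(u)|\le 1/2$ on $u/(e^u-1)$ for $u\ge 0$ is sound and arguably cleaner than the paper's route, which instead shows that $f''$ can change sign at most once on $[0,A_\text{max}]$ and combines this with the endpoint signs to force a unique zero.

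For (ii) your high-level strategy (large-$\beta$ expansion of the Jacobian of the return map, written as a composition of firing maps) coincides with the paper's, but the execution diverges in two ways. First, you do not exploit the key symmetry: at the equi-distributed fixed point the firing map itself has this point as a fixed point, so the return-map Jacobian is simply $M_N^N$ for a \emph{single} matrix $M_N$, not a product of $N$ distinct Jacobians. The paper expands $M_N=U_N+\tfrac{1}{\beta}V_N+o(\tfrac{1}{\beta})$ and computes $W_N:=U_N^N+\tfrac{1}{\beta}\sum_k U_N^k V_N U_N^{N-1-k}$. Second, the paper does not use Schur--Cohn at all: for $N\in\{2,\dots,5\}$ it verifies an explicit closed form for $W_N$ and then reads off the full eigenstructure directly (one eigenvalue $0$, the remaining $N-1$ all equal to $1-\nu/\beta$). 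So the actual obstacle to general $N$ is not the growth of Jury inequalities but simply the verification of the closed form of $W_N$; once that form is granted, the spectral conclusion is immediate for every $N$. A small caution on your heuristic: the $O(1/\beta)$ correction to the $(N-1)\times(N-1)$ block is $-\tfrac{\nu}{N\beta}(NI-J)$, which is only negative \emph{semi}-definite (it annihilates the all-ones vector); the eigenvalue in that direction is pulled below $1$ only through the coupling to the $A$-row, so a pure ``negative-definite perturbation of the identity block'' argument would not close on its own.
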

We believe that statement {\em (ii)} holds for every $N>1$.  In order to ensure this property, it suffices to prove that the expression of the matrix $W_N$ defined in the proof below, holds for every $N$. Independently, by combining Lemma \ref{TWOCLUST} with Theorem \ref{STABPARTSYNC}, one immediately obtains the following conclusion.
\begin{Cor}
Given $N\in\{2,\cdots, 5\}$, assume that \eqref{CONDORDER} holds with $\beta>\beta_N$ so that for $\tau=0$, the equi-distributed fixed point with $N$ clusters is exponentially stable in its proper partially synchronized subspace.

\noindent
(i) There exists $q_N\in\N$ such that for $\tau=0$ and every $q>q_N$, the fixed point $\left(\{q ,x^\text{FP}_i\}_{i=1}^{N},A^\text{FP}\right)$ is unstable with respect to some arbitrarily small perturbations of any of its clusters.

\noindent
(ii) There exists $\tau_N\in \R_\ast^+$ such that for every $\tau\in (0,\tau_N)$ and $q\in \N$, the return map of the partially synchronized dynamics with delay $\tau$ has a fixed point which is the continuation of $\left(\{q,x^\text{FP}_i\}_{i=1}^{N},A^\text{FP}\right)$. This continued fixed point is exponentially stable with respect to small perturbations that smear any of its clusters.
\end{Cor}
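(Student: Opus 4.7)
The plan is to derive the Corollary as an essentially mechanical specialization of Theorem \ref{STABPARTSYNC} to the equi-distributed fixed point supplied by Lemma \ref{TWOCLUST}. The bulk of the mathematical content has already been done: Lemma \ref{TWOCLUST}(i) produces the fixed point and Lemma \ref{TWOCLUST}(ii) certifies its exponential stability inside its own partially synchronized subspace at $\tau=0$; Theorem \ref{STABPARTSYNC} then converts both of these facts into the two parts of the Corollary. So the proposal is really a bookkeeping argument that lines up hypotheses and notation.

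First I would identify the correspondence between the data. The $N$-cluster equi-distributed fixed point $\left(\{x^\text{FP}_i\}_{i=1}^N,A^\text{FP}\right)$ of Lemma \ref{TWOCLUST} lives in a population of $N$ oscillators with cluster distribution $\{n_k\}_{k=1}^N$ given by $n_k=1$ for every $k$, so that $\sum_{k=1}^N n_k=N$ and $K=N$ in the notation of Theorem \ref{STABPARTSYNC}. Under the standing assumptions of the Corollary, namely $N\in\{2,\ldots,5\}$, condition \eqref{CONDORDER}, and $\beta>\beta_N$, Lemma \ref{TWOCLUST}(ii) guarantees that this fixed point is exponentially stable in its proper partially synchronized subspace for $\tau=0$. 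This is exactly the input hypothesis of Theorem \ref{STABPARTSYNC}.

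Applying Theorem \ref{STABPARTSYNC} with $K=N$ and $n_k=1$ then yields both items of the Corollary at once. Item (i) is item (i) of Theorem \ref{STABPARTSYNC} relabeled: it provides $q_N:=q_0\in\N$ such that for $\tau=0$ and every $q>q_N$ the fixed point $\left(\{q,x^\text{FP}_i\}_{i=1}^N,A^\text{FP}\right)$ of the return map in the population of $qN$ oscillators is unstable with respect to arbitrarily small perturbations of any of its clusters. Item (ii) is item (ii) of Theorem \ref{STABPARTSYNC} relabeled: it furnishes $\tau_N:=\tau_0\in\R^+_\ast$ such that for every $\tau\in(0,\tau_N)$ and every $q\in\N$, the return map of the partially synchronized dynamics with delay $\tau$ has a unique continuation of $\left(\{q,x^\text{FP}_i\}_{i=1}^N,A^\text{FP}\right)$ as a fixed point, and this continuation is exponentially stable with respect to small perturbations that smear any of its clusters.

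There is no real obstacle to overcome: the scaling-invariance remark preceding Theorem \ref{STABPARTSYNC} ensures that the coordinates of the equi-distributed fixed point are independent of $q$, so that no re-derivation is needed when passing from $N$ oscillators to $qN$; and the condition \eqref{CONDORDER} transfers verbatim. The only care required is to notice that Lemma \ref{TWOCLUST}(ii) exactly matches the exponential-stability-at-$\tau=0$ hypothesis of Theorem \ref{STABPARTSYNC}, after which the conclusion follows by direct quotation.
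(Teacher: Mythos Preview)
Your proposal is correct and takes essentially the same approach as the paper, which simply states that the Corollary follows immediately by combining Lemma \ref{TWOCLUST} with Theorem \ref{STABPARTSYNC}. Your explicit identification of $K=N$, $n_k=1$ and the relabelings $q_N:=q_0$, $\tau_N:=\tau_0$ is exactly the bookkeeping the paper leaves implicit.
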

One can show that the continuation of $(\mathbf x^\text{FP},A^\text{FP})$ for $\tau>0$ has indeed equi-distributed repressor concentrations. Moreover, recall from Claim \ref{CLAIMFINITEDIM} that $\tau_N$ also depends on the distance between the repressor concentrations immediately after firings. Hence, even if we assumed that statement {\em (ii)} in Lemma  \ref{TWOCLUST} held for all $N>1$ with $\sup_{N>1}\beta_N<+\infty$, we would not be able to ensure that $\inf_{N>1}\tau_N>0$. In order words, we do not know whether or not all equidistributed fixed points can be simultanously stable for some given delay $\tau>0$.
\medskip

\noindent
{\sl Proof of Lemma \ref{TWOCLUST}.}
{\em (i)} As usual, the return map fixed point $(\mathbf x^\text{FP},A^\text{FP})$ is assumed to coincide with the state at $t=0$ of the periodic orbit $t\mapsto (\mathbf x^\text{FP}(t),A^\text{FP}(t))$ of the continuous time system. The fixed point definition implies that there is no firing in the time interval $(0,x^\text{FP}_{N-1}]$ where $x^\text{FP}_{N-1}=\frac{R+\nu A^\text{FP}}{N}$. Hence, we have 
\[
m^\text{FP}(t)=m^\text{FP}(0^+)-t,\ \forall t\in (0,x_{N-1}]\ \text{where}\ m^\text{FP}(0^+)=\frac1{N}\left(\sum_{i=1}^{N-1}x_i^\text{FP}+R+\nu A^\text{FP}\right)=\frac{(N+1)(R+\nu A^\text{FP})}{2N}.
\]
Using \eqref{Eq:EvforA}, the fixed point equation $A^\text{FP}=\phi_{m^\text{FP}}^{x^\text{FP}_{N-1}}(A^\text{FP})$ then rewrites as $f(A^\text{FP})=0$ where $f$ is given by the following expression
 \[
f(A)=\left(\frac{(N+1)(R+\nu A)}{2 N}-\beta A+\frac1{\beta}\right)(1-e^{-\beta \frac{R+\nu A}{N}})-\frac{R+\nu A}{N}.
 \]
In particular, we have $f(0)=\frac{g(\beta R)}{\beta N}$ where 
 \[
 g(x)=\left(\frac{(N+1)x}{2 }+N\right)(1-e^{-\frac{x}{N}})-x.
 \]
 Basic calculations yield
 \[
 g(0)=0\quad \text{and}\quad g'(x)>0,\ \forall x>0,
 \]
hence $f(A)>0$ for all $A>0$ sufficiently small. Moreover, recall the upper bound $A_\text{max}=\frac{R}{\beta-\nu}$ of the attracting set. We have 
\[
 f(A_\text{max})=-\frac{\beta (N-1)A_\text{max}}{2 N}(1-e^{-\beta^2 \frac{A_\text{max}}{N}}) +\frac1{\beta}(1-e^{-\beta^2 \frac{A_\text{max}}{N}})-\frac{\beta A_\text{max}}{N}<0
 \]
because $1-e^{-x}<x$ for all $x>0$. Therefore, the function $f$ must have a zero $A^\text{FP}$ in the interval $(0,A_\text{max})$.
 
In order to prove uniqueness, we successively compute the first and second derivatives of $f$ to obtain
\[
f''(A)=\frac{\beta\nu}{N}\left(2(\nu -\beta)-\frac{\beta\nu}{N}\left(\frac{(N+1)(R+\nu A)}{2 N}-\beta A\right)\right)e^{-\beta \frac{R+\nu A}{N}}.
\]
Since $\beta> \nu>\nu\frac{N+1}{2N}$, the expression inside the parenthesis is increasing, {\sl viz.}\ the sign of $f''$ can only change once in $[0,A_\text{max}]$, from negative to positive. Together with the facts that $f(0)>0$ and $f(A_\text{max})<0$, this implies that $f$ crosses 0 only once in this interval. 
\medskip

\noindent
{\em (ii)} As in \cite{FT11}, the finite-dimensional return dynamics (that takes place in the neighbourhod of $(\mathbf x^\text{FP},A^\text{FP})$) can be regarded as the composition of the following variation of the firing maps introduced at the begining at the proof of Theorem \ref{STABPARTSYNC} 
\begin{equation}
x'_i=\left\{\begin{array}{ccl}
R+\nu A-x_{N-1}&\text{if}&i=1\\
x_{i-1}-x_{N-1}&\text{if}&i\in\{2,\cdots,N-1\}\\
\end{array}\right.\quad\text{and}\quad A'=\phi_m^{x_{N-1}}(A),
\label{FIRINGMAP}
\end{equation}
More precisely, the local stability of $(\mathbf x^\text{FP},A^\text{FP})$ can be obtained from the spectrum of the product of the derivatives of this map along the elements of the corresponding periodic orbit. However, since $(\mathbf x^\text{FP},A^\text{FP})$ is a fixed point of this firing map, one actually has to evaluate the spectrum of $M_N^N$ where $M_N$ is the derivative of \eqref{FIRINGMAP} evaluated at $(\mathbf x^\text{FP},A^\text{FP})$ with 
\[
m(t)=m(0^+)-t=\frac1{N}\left(\sum_{i=1}^{N-1}x_i+R+\nu A\right)-t\ \text{for}\ \in (0,x_{N-1}].
\]
From \eqref{Eq:EvforA} and $m(t)=m(0^+)-t$, we have
\[
\phi_m^t(A)=Ae^{-\beta t}+\left(\frac{m(0^+)}{\beta}+\frac1{\beta^2}\right)(1-e^{-\beta t})-\frac{t}{\beta},
\]
which yields
\[
\partial_{x_i} \phi_m^{x_{N-1}}(A)=\left\{\begin{array}{ccl}
\frac{1-e^{-\beta x_{N-1}}}{\beta N}&\text{if}&i\in\{1,\cdots ,N-2\}\\
\left(m(0^+)+\frac1{\beta}-\beta A\right)e^{-\beta x_{N-1}} -\frac1{\beta}+\frac{1-e^{-\beta x_{N-1}}}{\beta N}&\text{if}&i=N-1
\end{array}\right.
\]
and
\[
\partial_{A} \phi_m^{x_{N-1}}(A)=e^{-\beta x_{N-1}}+\frac{\nu(1-e^{-\beta x_{N-1}})}{\beta N}.
\]
Evaluating these quantities at $(\mathbf x^\text{FP},A^\text{FP})$ and expanding in $\frac1{\beta}$, one gets that the derivative $M_N$ writes
\[
M_N=U_N+\frac1{\beta} V_N+o(\frac1{\beta}),
\]
where $U_N$ and $V_N$ are the following $N\times N$ matrices
\[
U_N=\left(\begin{array}{cccccc}0&\cdots&\cdots&0&-1&\nu\\1&0&\cdots&0&-1&0\\0&1&\ddots&\vdots&\vdots&\vdots\\\vdots&&\ddots&\vdots&\vdots&\vdots\\0&\cdots&0&1&-1&0\\0&\cdots&\cdots&\cdots&\cdots&0\end{array}\right)\quad\text{and}\quad 
V_N=\left(\begin{array}{cccccc}0&\cdots&\cdots&\cdots&\cdots&0\\\vdots&&&&&\vdots\\\vdots&&&&&\vdots\\\vdots&&&&&\vdots\\\\0&\cdots&\cdots&\cdots&\cdots&0\\\frac1{N}&\cdots&\cdots&\frac1{N}&-\frac{N-1}{N}&\frac{\nu}{N}\end{array}\right).
\]
Accordingly, we have
\[
M_N^N=W_N+o(\frac1{\beta})\quad\text{where}\quad W_N:=U_N^N+\frac1{\beta}\sum_{k=1}^{N-1}U_N^kV_NU_N^{N-1-k}.
\]
For $N\in\{2,\cdots ,5\}$, we have checked that $W_N$ writes
\[
W_N=\left(\begin{array}{cccccc}1-\frac{(N-1)\nu}{N\beta}&\frac{\nu}{N\beta}&\cdots&\cdots&\frac{\nu}{N\beta}&-\nu(1-\frac{\nu}{N\beta})\\\frac{\nu}{N\beta}&1-\frac{(N-1)\nu}{N\beta}&\frac{\nu}{N\beta}&\cdots&\frac{\nu}{N\beta}&-\nu(1-\frac{\nu}{N\beta})\\\frac{\nu}{N\beta}&\frac{\nu}{N\beta}&\ddots&\ddots&\vdots&\vdots\\\vdots&&\ddots&\ddots&\frac{\nu}{N\beta}&\vdots\\\frac{\nu}{N\beta}&\cdots&\cdots&\frac{\nu}{N\beta}&1-\frac{(N-1)\nu}{N\beta}&-\nu(1-\frac{\nu}{N\beta})\\\frac{1}{N\beta}&\cdots&\cdots&\cdots&\frac{1}{N\beta}&-\frac{(N-1)\nu}{N\beta}\end{array}\right).
\]
The eigenvectors of $W_N$ and its eigenvalues can be readily obtained. Firstly, notice that 
\[
U_N(\nu,\cdots,\nu,1)^T=V_N(\nu,\cdots,\nu,1)^T=0,
\]
hence $W_N(\nu,\cdots,\nu,1)^T=0$ too. Moreover, $W_N$ clearly has the following $N-2$ eigenvectors
\[
(1,-1,0,\cdots,0)^T,\ (1,0,-1,0,\cdots,0)^T,\cdots,\ (1,0,\cdots,0,-1,0,0)^T,\ (1,0,\cdots,0,-1,0)^T,
\]
(or explicitly $(v_k)_i=\delta_{i,1}-\delta_{i,k}$ for $i,k\in\{2,\cdots ,N-1\}$) and the corresponding eigenvalue is equal to $1-\frac{\nu}{\beta}$ in each case. Finally, the remaining eigenvector writes $(N\beta-\nu,0,\cdots ,0,1)^T$ and the corresponding eigenvalue is also equal to $1-\frac{\nu}{\beta}$. Therefore, the spectrum of $W_N$ lies inside the unit disk; hence so does the spectrum of $M^N_N$ when $\beta$ is sufficiently large. Statement {\em (ii)} is proved. 
\hfill $\Box$

\section{Concluding remarks}
In this paper, we have presented an extended mathematical analysis of the dynamics of the model introduced in \cite{MHT14} of DF oscillators coupled via a common activator, with emphasis on the stability with respect to out-of-sync perturbations depending on the delay in activator synthesis. After a study of the basic properties of the flow associated with the equations \eqref{ODE1}-\eqref{ODE2}, the analysis has firstly considered the case of fully synchronized trajectories and then has adressed arbitrary partially synchronized periodic orbits. 

The most significant outcome of this endeavour is that the stability of periodic orbits abruptly changes when the delay is switched on. From an unstable solution, the orbit immediately becomes asymptotically stable with respect to small perturbations that smear its clusters. 

While this change of behaviour appears to be spectacular, it can be readily apprehended from the general criteria of Section \ref{S-PARTSYNC} (which themselves can be intuited from the expression \eqref{Eq:ReturnMap} of the return map) together with the profile of the activator concentration close to firing, in the periodic orbit. More precisely, Lemma \ref{STABCRIT} about stability requires that $A^\text{FP}(\cdot)$, as a function of time, be decreasing immediately before firing. Lemma \ref{INSTABCRIT} about instability needs that the derivative of this function be positive and sufficiently large immediately after firing. 

Notice that, according to the original equation \eqref{ODE1}--\eqref{ODE2}, both properties should be commmon features of the periodic orbits in this system, as illustrated for the synchronized trajectory in the left panel of Fig.\ \ref{Fig:FlowPlusReturnMap}. Indeed, the variation of the vector field acting on $A$ must be locally minimal immediately before any firing (because $m$ decreases between two consecutive firings) and firings trigger sudden increases of this vector field.

Finally, we believe that these stylized features and the accompanying rapid change in stability extend to more general, smooth models of DF oscillators with similar ingredients, especially when firings occur on very short time scales and the corresponding reset values are impacted by some delay in the activator synthesis. Such extensions could be the subject of future studies.
\appendix

\section{Lipschitz continuity of the return map}\label{A-LIPCONT}
Let $N\in\N$ and $\tau>0$ be given. Consider an initial datum $(\mathbf x,A_{[-\tau,0]})$ in the section $x_N=0$, $\|A|_{[-\tau,0]}\|_0<A_\text{max}$ and $\text{max}_{\mathbf x}<x_N(0^+)=t_\text{R}=R+\nu A(-\tau)$, so that the trajectory lies in the attracting set $Q$ and the order in which the oscillators fire remains the same starting from $t=0$. Assume also that $A_{[-\tau,0]}$ is Lipschitz continuous, with maximal Lipschitz constant $\beta A_\text{max}$ when in $Q$ (see the proof of Claim \ref{CLAIMORDER} above).

Given another datum $(\mathbf x',A'_{[-\tau,0]})$ with the same constraints, let $t'_\text{R}$ be the first return time to the section $x_N=0$. We have the following statement
\begin{Lem}
In addition to the assumptions above on $(\mathbf x,A_{[-\tau,0]})$ and $(\mathbf x',A'_{[-\tau,0]})$, assume that 
\[
\text{max}_{\mathbf x}<t'_\text{R}\ \text{if}\ t'_\text{R}\leq t_\text{R}\quad\text{and}\quad 
\text{max}_{\mathbf x'}<t_\text{R}\ \text{if}\ t_\text{R}\leq t'_\text{R}.
\]
Then, there exists $L\in\R^+$ (which is independent of the data) such that we have 
\[
\max\left\{\|\mathbf x(t_\text{R})-\mathbf x'(t'_\text{R})\|_{N-1},\|A|_{[t_\text{R}-\tau,t_\text{R}]}-A'|_{[t'_\text{R}-\tau,t'_\text{R}]}\|_0\right\}\leq L \max\left\{\|\mathbf x-\mathbf x'\|_{N-1},\|A|_{[-\tau,0]}-A'|_{[-\tau,0]}\|_0\right\}.
\]
\label{LIPCONT}
\end{Lem}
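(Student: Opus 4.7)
\noindent
The plan is to exploit order preservation and the boundedness of trajectories in $Q$ in order to reduce the proof to a Gr\"onwall-type estimate on $|A(u)-A'(u)|$. Set
\[
\sigma:=\max\left\{\|\mathbf x-\mathbf x'\|_{N-1},\,\|A|_{[-\tau,0]}-A'|_{[-\tau,0]}\|_0\right\}.
\]
Since $t_\text{R}=R+\nu A(-\tau)$ (and analogously for $t'_\text{R}$), I immediately get $|t_\text{R}-t'_\text{R}|\leq \nu\sigma$. Moreover, $(\mathbf x(t),A(t))\in Q$ on $[0,t_\text{R}]$ implies $|m(t)|\leq R+\nu A_\text{max}$, hence $|\dot A(t)|\leq R+\nu A_\text{max}+\beta A_\text{max}=:L_A$ uniformly in all admissible data. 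Combined with the assumed Lipschitz bound on $A|_{[-\tau,0]}$, this yields a uniform Lipschitz constant $L_A$ for both $A$ and $A'$ on the intervals under consideration.

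The core step is to uniformly control
\[
\Delta(u):=\sup_{v\in[-\tau,u]}|A(v)-A'(v)|,\quad u\in[0,\max(t_\text{R},t'_\text{R})].
\]
For $u\geq 0$, the variation-of-constants formula \eqref{Eq:EvforA} yields $|A(u)-A'(u)|\leq \sigma+\int_0^u|m(s)-m'(s)|\,ds$. To bound the integrand I would split the integration interval into $I:=\bigcup_{i<N}[\min(x_i,x'_i),\max(x_i,x'_i)]$ and its complement. By order preservation, each oscillator fires exactly once in $[0,t_\text{R}]$, so $|I|\leq (N-1)\sigma$; on $I$, the trivial estimate $|m-m'|\leq R+\nu A_\text{max}$ gives $\int_I|m-m'|\,ds=O(\sigma)$. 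Outside $I$, for each $i<N$ either both trajectories have fired, in which case the return formula of Section \ref{S-RETMAP} yields
\[
|x_i(s)-x'_i(s)|\leq |x_i-x'_i|+\nu|A(x_i-\tau)-A'(x'_i-\tau)|\leq (1+\nu L_A)\sigma+\nu\Delta(s-\tau),
\]
or neither has fired, in which case $|x_i(s)-x'_i(s)|=|x_i-x'_i|\leq \sigma$; oscillator $N$ similarly satisfies $|x_N(s)-x'_N(s)|\leq \nu\sigma$. Summing over $i$, integrating, and using that $\Delta$ is non-decreasing, I would obtain
\[
\Delta(u)\leq \kappa_1\sigma+\nu\int_0^u\Delta(s)\,ds,
\]
whence Gr\"onwall's inequality gives $\Delta(\max(t_\text{R},t'_\text{R}))\leq \kappa_2\sigma$ with $\kappa_2$ depending only on $N,R,\nu,\beta,\tau$ and $A_\text{max}$.

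To conclude I would combine these ingredients in two parts. For $i\in\{1,\cdots,N-1\}$, the return formula
\[
x_i(t_\text{R})-x'_i(t'_\text{R})=(x_i-x'_i)+\nu[A(x_i-\tau)-A'(x'_i-\tau)]-\nu[A(-\tau)-A'(-\tau)]
\]
combined with $L_A$-Lipschitz continuity and $\Delta\leq\kappa_2\sigma$ is controlled by a constant multiple of $\sigma$. For the activator piece, assuming without loss of generality $t_\text{R}\geq t'_\text{R}$, I would write, for $s\in[-\tau,0]$,
\[
|A(t_\text{R}+s)-A'(t'_\text{R}+s)|\leq L_A|t_\text{R}-t'_\text{R}|+|A(t'_\text{R}+s)-A'(t'_\text{R}+s)|\leq (\nu L_A+\kappa_2)\sigma.
\]
I expect the main obstacle to be the pointwise control of $|m(s)-m'(s)|$, which can be of order unity on the firing-discrepancy set $I$; this is absorbed by the fact that $|I|=O(\sigma)$, so that the singular contribution to the integral remains linear in $\sigma$ and the Gr\"onwall loop closes cleanly.
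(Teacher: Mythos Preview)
Your proof is correct and follows the same skeleton as the paper's: both estimate $|A-A'|$ via the variation-of-constants formula, split the time axis into the firing-discrepancy set $I=\bigcup_i[\min(x_i,x'_i),\max(x_i,x'_i)]$ (where $|m-m'|$ is merely bounded but $|I|=O(\sigma)$) and its complement (where $|x_i(s)-x'_i(s)|$ is controlled by the reset formula), and then read off the bound on $\|\mathbf x(t_\text{R})-\mathbf x'(t'_\text{R})\|_{N-1}$ from the return-map expression.

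The one substantive difference is how the feedback loop is closed. On the complement of $I$, the quantity $|A(x'_i-\tau)-A'(x'_i-\tau)|$ appears; when $x'_i>\tau$ this is a difference of the two activator trajectories at a \emph{positive} time and is not a priori bounded by $\|A|_{[-\tau,0]}-A'|_{[-\tau,0]}\|_0$. The paper absorbs it directly into the initial-data term, which is only literally valid when $x'_i\le\tau$ and otherwise leaves an implicit iteration over successive firings. You instead feed this term back as $\nu\Delta(s-\tau)\le\nu\Delta(s)$ and close with Gr\"onwall. Your route is therefore slightly more careful on this point and yields a clean uniform constant $\kappa_2=\kappa_1 e^{\nu(R+\nu A_\text{max})}$ without any bookkeeping; the paper's direct estimate is terser but relies on the reader to supply the missing induction. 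Either way the final Lipschitz bound is the same.
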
 
\begin{proof}
{\em Estimate of $\|A|_{[t_\text{R}-\tau,t_\text{R}]}- A'|_{[t'_\text{R}-\tau,t'_\text{R}]}\|_0$:} We have $t_\text{R}-t'_\text{R}=\nu(A(-\tau)-A'(-\tau))$. Using \eqref{Eq:EvforA}, we consider the following decomposition similar to the one in the proof of statement {\em (ii)} of Proposition \ref{MAINRES}
\begin{align*}
A(t_\text{R}-t)-A'(t'_\text{R}-t)=&A(t_\text{R}-t)(1-e^{-\beta(t'_\text{R}-t_\text{R})})+\left(A(0)-A'(0)+\int_0^{t_\text{R}\wedge t'_\text{R}-t}e^{\beta s}(m(s)-m'(s))ds\right.\\
&+\left.\int_{t_\text{R}\wedge t'_\text{R}-t}^{t_\text{R}\vee t'_\text{R}-t} e^{\beta s}\overline{m}(s)ds\right)e^{-\beta (t'_\text{R}-t)}
\end{align*}
for $t\in [-\tau,0]$, where 
\[
\overline{m}:=\left\{\begin{array}{ccl}
m&\text{if}&t_\text{R}>t'_\text{R}\\
-m'&\text{if}&t_\text{R}<t'_\text{R}
\end{array}\right.
\]
That $A(t),A'(t)$ are uniformly bounded implies the same property for $t'_\text{R}$ and $m,m'$. Together with the expression of $t_\text{R}-t'_\text{R}$ above, this implies that the modulus of the first, second and last terms in the above decomposition can be controlled by $\|A|_{[-\tau,0]}-A'|_{[-\tau,0]}\|_0$. For the remaining term, we need to control $|m(s)-m'(s)|$ for $s\in [0,t_\text{R}\wedge t'_\text{R}-t]$. Notice that each oscillator fires at most once in this interval. Hence, we have
\[
x_i(s)-x'_i(s)=\left\{\begin{array}{ccl}
x_i-x'_i&\text{if}&s\leq x_i\wedge x'_i\\ 
\nu (A(x_i-\tau)-A'(x'_i-\tau))+x_i-x'_i&\text{if}&s> x_i\vee x'_i
\end{array}\right.
\]
Accordingly, outside the intervals $[x_i\wedge x'_i,x_i\vee x'_i]$, the quantity $|m(s)-m'(s)|$ is certainly bounded by 
\begin{align*}
&\frac{\nu}{N}\sum_{i=1}^{N-1}|A(x_i-\tau)-A(x'_i-\tau)|+\frac{\nu}{N}\sum_{i=1}^{N-1}|A(x'_i-\tau)-A'(x'_i-\tau)|+\frac{1}{N}\sum_{i=1}^{N-1}|x_i-x'_i|\\
&\leq L_1\|\mathbf x-\mathbf x'\|_{N-1}+\nu \|A|_{[-\tau,0]}-A'|_{[-\tau,0]}\|_0
\end{align*}
for some $L_1\in\R^+$ sufficiently large, where, in addition to the fact that $A$ is Lipschitz continuous on $\R^+$, for those $x_i,x'_i\in (0,\tau]$, the second inequality also relies on the assumption that $A|_{[-\tau,0]}$ is Lipschitz continuous. Moreover, inside the intervals $[x_i\wedge x'_i,x_i\vee x'_i]$, the quantity $|m(s)-m'(s)|$ is uniformly bounded because the corresponding trajectories are in $Q$. There are at most $N-1$ such intervals whose length is bounded by $\|\mathbf x-\mathbf x'\|_{N-1}$. Altogether, the arguments here prove the existence of $L_2,L_3\in \R^+$ (which do not depend on $(\mathbf x,A_{[-\tau,0]})$ and $(\mathbf x',A'_{[-\tau,0]})$ when in $Q$) such that 
\[
\|A|_{[t_\text{R}-\tau,t_\text{R}]}- A'|_{[t'_\text{R}-\tau,t'_\text{R}]}\|_0\leq L_2 \|\mathbf x-\mathbf x'\|_{N-1}+L_3 \|A|_{[-\tau,0]}-A'|_{[-\tau,0]}\|_0.
\]

\noindent
{\em Estimate of $\|\mathbf x(t_\text{R})-\mathbf x'(t'_\text{R})\|_{N-1}$:} Assume that $t_\text{R}\leq t'_\text{R}$, the other case can be treated similarly. Consider the decomposition
\[
x_i(t_\text{R})-x'_i(t'_\text{R})=x_i(t_\text{R})-x'_i(t_\text{R})+x'_i(t_\text{R})-x'_i(t'_\text{R}).
\]
The assumptions $\text{max}_{\mathbf x},\text{max}_{\mathbf x'}<t_\text{R}$ imply that all oscillators in both trajectories must have fired once when at instant $t_\text{R}$. Accordingly, the first difference can be controlled using the same estimate as above in the case $s>x_i\vee x'_i$. Moreover, all oscillators with $x'_i>0$ have not fired a second time at instant $t'_\text{R}$. Hence, we have $x'_i(t_\text{R})-x'_i(t'_\text{R})=t'_\text{R}-t_\text{R}$ which is also well under control. Altogether, this proves that a similar estimate as for $\|A|_{[t_\text{R}-\tau,t_\text{R}]}- A'|_{[t'_\text{R}-\tau,t'_\text{R}]}\|_0$ holds for $\|\mathbf x(t_\text{R})-\mathbf x'(t'_\text{R})\|_{N-1}$. 
\end{proof}

\end{document}